\newtheorem{theorem}{Theorem}[section]
\newtheorem{Definition-Lemma}[theorem]{Definition-Lemma}
\newtheorem{lemma}[theorem]{Lemma}
\newtheorem{proposition}[theorem]{Proposition}
\newtheorem{corollary}[theorem]{Corollary}
\theoremstyle{definition}
\newtheorem{definition}[theorem]{Definition}
\newtheorem{conjecture}[theorem]{Conjecture}
\newtheorem{example}[theorem]{Example}
\theoremstyle{remark}
\newtheorem{remark}[theorem]{Remark}
\newtheorem{Rem}[theorem]{Remark}
\numberwithin{equation}{section}
\newcommand{\A}{\mathcal A}
\newcommand{\B}{\mathcal B}
\newcommand{\al}{\alpha}
\newcommand{\be}{\beta}
\newcommand{\C}{ \mathbb C}
\newcommand{\K}{ \mathbb K} 
\newcommand{\N}{\mathbb N}
\newcommand{\DB}{{\mathcal D}_{\bar{\mathcal B}}}
\newcommand{\FF}{{\mathcal F}}
\newcommand{\g}{{\gamma}}
\newcommand{\no}{\text{:}}
\newcommand{\affb}{\mathfrak b}
\newcommand{\affg}{\mathfrak g}
\newcommand{\affh}{\mathfrak h}
\newcommand{\affn}{\mathfrak n}
\newcommand{\heis}{\mathfrak{hs}}
\newcommand{\finb}{\bar{\mathfrak b}}
\newcommand{\fing}{\bar{\mathfrak g}}
\newcommand{\finh}{\bar{\mathfrak h}}
\newcommand{\finn}{\bar{\mathfrak n}}
\newcommand{\h}{\mathfrak{h}}
\newcommand{\+}{\mathop{\oplus}}
\newcommand{\der}{\partial}
\newcommand{\la}{\lambda}
\newcommand{\mf}{\mathfrak}
\newcommand{\Mg}{M}
\newcommand{\Mgo}{M_{0}}
\newcommand{\ov}{\overline}
\newcommand{\Res}{ {\rm Res} }
\newcommand{\Vg}{V^{\kappa}(\affg)}
\newcommand{\Vgc}{V^{\kappa}(\affg)}
\newcommand{\Vgo}{V^{\kappa}_0(\affg)}
\newcommand{\Vgco}{V^{\kappa}_0(\affg)}
\newcommand{\vac}{|0\rangle}
\newcommand{\ww}{\texttt{w}}
\newcommand{\WZ}{W_\Z(\la)}
\newcommand{\Z}{ \mathbb Z }
\newcommand{\zz}{ \mathfrak z }
\newcommand{\ZZ}{ \mathcal Z }
\newcommand{\ra}{\rightarrow}
\def\l{{\langle}}
\def\r{{\rangle}}
 \DeclareMathOperator{\ad}{ad}
\begin{document}
\title[Modular affine vertex algebras and baby Wakimoto modules]
{Modular affine vertex algebras and baby Wakimoto modules}

\author[T. Arakawa]{Tomoyuki Arakawa}
\address{
RIMS, Kyoto University, Kyoto
606-8502, Japan} \email{arakawa@kurims.kyoto-u.ac.jp}

\author[W. Wang]{Weiqiang Wang}
\address{Department of Mathematics, University of Virginia,
Charlottesville, VA 22904} \email{ww9c@virginia.edu}

\begin{abstract}
We develop some basic properties such as $p$-centers 
of affine vertex algebras and  free field vertex algebras in prime characteristic.
We show that the Wakimoto-Feigin-Frenkel homomorphism preserves the $p$-centers
by providing explicit formulas.
This allows us to formulate the notion of baby Wakimoto modules, which in particular
provides an interpretation in the
context of modular vertex algebras for Mathieu's
irreducible character formula of modular affine Lie algebras at the critical level. 
\end{abstract}

\maketitle

\date{}

\section{Introduction}

Let $\K$ be an algebraically closed field of prime characteristic $p$. 
Denote by $U=\K \otimes_\Z U_\Z$, where $U_\Z$
is the Kostant-Garland $\Z$-form (including divided powers) of the universal enveloping algebra of $\affg$. 
Mathieu \cite{Ma} established a character formula for the irreducible highest weight $U$-module $L(-\rho)$ at the critical level (see \eqref{eq:rho}), 
which can be rephrased as that
the Wakimoto module of highest weight $-\rho$ over the complex field $\C$ remains irreducible over $U$ after reduction modulo $p$. 
Mathieu also gave a character formula for $\mf l(-\rho)$ (and also for $L((p-1)\rho)$); see \eqref{eq:char}-\eqref{eq:p-1rho}.
Here $\mf l (-\rho)$ denotes the irreducible quotient $\affg$-module  of the Verma $\affg$-module of high weight $-\rho$,
which can be regarded as an irreducible module over the restricted enveloping algebra ${\mf u}_0(\affg)$ (and ${\mf u}_0(\affg) \subset U$).
These two irreducible character formulas are equivalent by the Steinberg tensor product theorem and noting that $(p-1)\rho$ is a restricted weight. 

Modular vertex algebras (i.e., vertex algebras in prime characteristic) were first considered in \cite{BR}
by Borcherds and Ryba in their study of modular moonshine. 
This paper is motivated by putting Mathieu's result in a proper context of modular Lie algebras and modular vertex algebras
(where the algebra $U$ plays no role). 
We formulate the notion of $p$-centers for vertex algebras associated to Heisenberg algebras, affine algebras,
and some other free fields, and this gives rise to corresponding $p$-restricted vertex algebras. 
We show that the $p$-centers and the state-field correspondence for these vertex algebras are
compatible in a simple manner; cf. Proposition~\ref{prop:commYi}. 

Wakimoto modules (over $\C$) were introduced by Wakimoto \cite{Wak} for $\mathfrak{sl}_2$
and then by Feigin and E.~Frenkel for general semisimple Lie algebras \cite{FF}. 
Wakimoto modules have played a fundamental role in the affine vertex algebra setting and applications
to the geometric Langlands program, cf. \cite{Fr1, Fr2}. The construction of Wakimoto modules
relies on the Wakimoto-Feigin-Frenkel homomorphism $\ww$ from an affine vertex algebra to a bosonic free field vertex algebra. 
As a main result of this note  we show that $\ww$ (over the field $\K$) preserves the $p$-centers, and indeed we provide explicit
formulas for the restriction of $\ww$ on the $p$-center.
This allows us to formulate a notion of baby Wakimoto modules, which is analogous to the more
familiar notion of baby Verma modules for modular Lie algebras. 
Now Mathieu's result can be restated that the baby Wakimoto module of highest weight $-\rho$
is irreducible as  module over $\affg$ or over ${\mf u}_0(\affg)$ (that is, it coincides with $\mf l(-\rho)$ in the above notation). 

This paper is organized as follows. 
In Section~\ref{sec:VA}, we prove some basic properties of the modular affine vertex algebras including the $p$-centers.
In Section~\ref{sec:Wakimoto}, we describe the $p$-centers of the Heisenberg vertex algebra and of a symplectic bosonic vertex algebra.
We formulate the main construction of the baby Wakimoto modules.
In Section~\ref{sec:proof}, we establish the formulas for the WFF homomorphism on the $p$-center of the affine vertex algebra.
In Section~\ref{sec:irred}, we give a reformulation of Mathieu's main result in terms of the irreducibility of the baby Wakimoto module
of highest weight $-\rho$.  
We end with some conjectures and open problems on further development of modular representation theory of affine Lie algebras. 

\vspace{.2cm}
{\bf Acknowledgments.}
We have been working on this project on and off since 2007. 
The results were presented in the 
Taitung Workshop on ``Group theory, VOA and algebraic combinatorics", Taiwan, in March 2013, organized by C.-H. Lam.
There is some overlap of our work with a  recent paper by Li and Mu \cite{LM}, where one can find more
references on modular vertex algebras in recent years. 
The first author is partially supported by JSPS KAKENHI Grant Numbers
25287004, 26610006.
The second author is partially supported by an NSF grant DMS-1405131.

\section{Modular affine algebras and modular vertex algebras}
\label{sec:VA}

\subsection{Affine Lie algebra in prime characteristic}
\label{sec:affinep}

Let $\fing$ be a finite-dimensional semisimple Lie algebra, which
is a Lie algebra of a simply connected algebraic group $\bar{G}$ over an
algebraically closed field $\K$ of characteristic $p>0$.  
Then $\fing$ is a restricted Lie algebra (also called a $p$-Lie algebra)
with $p$-power map denoted by
$-^{[p]}$; cf. \cite{Jan} for a review of modular Lie algebras. 
Moreover, $\fing$ affords a non-degenerate bilinear
form $\l \cdot, \cdot \r$, which induces a linear isomorphism $\fing \rightarrow
\fing^*$. We fix a Chevalley basis $h_i (1\le i \le \ell),
e_\alpha, f_\al (\al \in \bar{\Delta}^+)$ of $\fing$, where
$\bar{\Delta}^+$ is a set of positive roots for $\fing$
corresponding to a set of simple roots $\bar{\Pi} =\{\al_1,\dots,
\al_\ell\}.$ We further write $e_i =e_{\al_i}, f_i =f_{\al_i}$.
We denote by $\bar{B}$ (respectively, $\bar{B}_-$)
the Borel subgroup of $\bar{G}$ whose Lie algebra $\finb$
(respectively, ${\finb}_-$) is spanned by root vectors from
$\bar{\Delta}^+$ (respectively, $\bar{\Delta}^-
=-\bar{\Delta}^+$).

We consider the affine Lie algebra
$$\affg \cong L\fing \oplus \K c 
$$
where $L\fing \cong \K [t,t^{-1}] \otimes \fing$. We shall write
$x_n =t^n \otimes x$ for $x\in \fing$ and $n\in\Z$. Then $\fing$
is naturally a Lie subalgebra of $\affg$ by the identification $1
\otimes \fing \cong \fing$. We denote by $h^\vee$ the dual Coxeter
number for the affine Lie algebra $\affg$.

A Cartan subalgebra $\affh$ and Borel subalgebra $\affb$ of
$\affg$ is
$$\affh =\finh +\K c 
$$
and a Borel subalgebra is $\affb =\K c  + t \K[t] \otimes \fing +
\finb$ with nilradical $\affn =t \K[t] \otimes \fing + \finn$, so
that $\affg =\mathfrak n^- \oplus \mathfrak h \oplus \mathfrak n$.
Denote by $\Delta_+$ the set of positive roots associated to $\mf n$,
and by $\Delta_+^{\rm re}$ the subset of real roots in $\Delta_+$. 
Let $\affg^*$ denote the restricted dual of $\affg$ associated to
the root space decomposition of $\affg$.

Denote by $\bar{T} \subset \bar G$ the maximal torus with Lie algebra $\finh$.
Let $\K^* =\K -\{0\}$ be the torus corresponding to the derivation $d$ on $\affg$, where $[d,c]=0$
and $[d, t^n \otimes x] =-n t^n \otimes x$ for $x\in \fing$ and $n\in \Z$. 
Set $T =\bar{T} \times \K^*$.

\begin{lemma} [cf. \cite{Ma}, (1.4)]
There is a  restricted Lie algebra structure on the
affine Lie algebra $\affg$ as an extension of the one on $\fing$,
whose $p$-power map is given by
$$c^{[p]} =c,\quad 
(t^n\otimes x)^{[p]} = t^{np} \otimes x^{[p]}, \quad  \text{ for }
n\in\Z, x\in \fing.
$$
\end{lemma}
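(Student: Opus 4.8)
The plan is to invoke the standard criterion for restricted structures (cf.\ \cite{Jan}): if a Lie algebra over $\K$ has a basis $(e_i)$ and for each $i$ there is some $f_i$ with $(\ad e_i)^p=\ad f_i$, then there is a unique $p$-operation with $e_i^{[p]}=f_i$. I would apply this with the basis $\{c\}\cup\{t^n\otimes x_a:n\in\Z,\ 1\le a\le\dim\fing\}$ of $\affg$, where $(x_a)$ is the fixed Chevalley basis of $\fing$, taking $f_c=c$ (legitimate since $\ad c=0=(\ad c)^p$) and $f_{t^n\otimes x_a}=t^{np}\otimes x_a^{[p]}$ (legitimate since $\fing$ is restricted, so $x_a^{[p]}$ makes sense).

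The computational core is the operator identity $(\ad(t^n\otimes x))^p=\ad(t^{np}\otimes x^{[p]})$ on $\affg$, for every $x\in\fing$. Write $D:=\ad(t^n\otimes x)=D_0+D_c$, where $D_0(t^m\otimes y)=t^{n+m}\otimes[x,y]$ and $D_c(t^m\otimes y)=n\,\delta_{n+m,0}\,\langle x,y\rangle\,c$, with $D_0(c)=D_c(c)=0$. Since $D_c$ has image in $\K c$ while both $D_0$ and $D_c$ kill $\K c$, one has $D_c^2=D_0D_c=0$, so only the words $D_0^p$ and $D_cD_0^{p-1}$ survive in the expansion of $(D_0+D_c)^p$. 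Now $D_0^p(t^m\otimes y)=t^{np+m}\otimes(\ad x)^p(y)=t^{np+m}\otimes[x^{[p]},y]$ by the restricted axiom in $\fing$, whereas $D_cD_0^{p-1}(t^m\otimes y)$ is a scalar multiple of $\langle x,(\ad x)^{p-1}y\rangle$, which vanishes because the form is invariant and $[x,x]=0$ (explicitly $\langle x,(\ad x)^{k}y\rangle=\langle[x,x],(\ad x)^{k-1}y\rangle=0$ for $k\ge1$). Hence $D^p(t^m\otimes y)=t^{np+m}\otimes[x^{[p]},y]$, which also equals $\ad(t^{np}\otimes x^{[p]})(t^m\otimes y)=t^{np+m}\otimes[x^{[p]},y]+(np)\,\delta_{np+m,0}\,\langle x^{[p]},y\rangle\,c$ since $np=0$ in $\K$; both operators likewise annihilate $c$. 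So the criterion applies and produces a unique $p$-operation on $\affg$ with $c^{[p]}=c$ and $(t^n\otimes x_a)^{[p]}=t^{np}\otimes x_a^{[p]}$.

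What remains — and this is the step I expect to need the only real care — is to promote the formula from the chosen basis vectors to all simple tensors $t^n\otimes x$, i.e.\ to control how the central cocycle propagates through the iterated-bracket formula for $p$-th powers of sums. For $n\ne0$ I would argue with the $\Z$-grading $\deg(t^k\otimes y)=k$, $\deg c=0$: all assigned basis $p$-powers are homogeneous of $p$ times their degree, so the resulting $p$-operation sends $\affg_k$ into $\affg_{pk}$; combined with $\ad((t^n\otimes x)^{[p]})=(\ad(t^n\otimes x))^p=\ad(t^{np}\otimes x^{[p]})$ and $Z(\affg)=\K c$, this forces $(t^n\otimes x)^{[p]}-t^{np}\otimes x^{[p]}\in\K c\cap\affg_{pn}=0$. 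For $n=0$ I would use that $t^0\otimes\fing\cong\fing$ is spanned by part of our basis, that the defining cocycle vanishes identically on it, and that the assigned $p$-powers of its basis vectors lie inside it, so that the $p$-power formula evaluates $(t^0\otimes x)^{[p]}$ entirely within $t^0\otimes\fing$ and returns the given $p$-operation of $\fing$; this yields $(t^0\otimes x)^{[p]}=t^0\otimes x^{[p]}$ and simultaneously shows the new structure restricts to the given one on $\fing$.
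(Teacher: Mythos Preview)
Your argument is correct and complete: the Jacobson criterion applies once you verify $(\ad(t^n\otimes x))^p=\ad(t^{np}\otimes x^{[p]})$, and your decomposition $D=D_0+D_c$ together with the invariance computation $\langle x,(\ad x)^{p-1}y\rangle=0$ handles this cleanly; the grading/subalgebra arguments in the final paragraph legitimately upgrade the formula from basis vectors to arbitrary simple tensors. The paper itself gives no proof of this lemma --- it simply records the statement with a reference to Mathieu \cite{Ma}, (1.4) --- so there is nothing to compare your approach against.
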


Then as usual one has the $p$-center $\ZZ_0(\affg)$ in the
enveloping algebra $U(\affg)$ which is generated by $x^p -x^{[p]}$
for all $x\in \affg$. The subalgebra 
of $\ZZ_0(\affg)$ generated by $x^p -x^{[p]}$
for all $x\in L\fing$ will be denoted by $\ZZ_0'(\affg)$ and referred to as
the {\em proper $p$-center}. 

Each $\chi \in (L \fing)^*$ defines a
$p$-character and gives rise to the reduced enveloping algebra by
$${\mf u}_\chi(\affg) = U(\affg) /I_\chi$$
where $I_\chi$ is the ideal generated by $a^p -a^{[p]} -\chi(a)^p$
for all $a\in L \fing$. In particular, ${\mf u}_0(\affg)$ is called the
restricted enveloping algebra of $\affg$. Note that according to our definition
$c^p-c$ is not in the ideal $I_0$.

A distinguished restricted Lie subalgebra of $\affg$ is the
Heisenberg algebra
$$
\heis = L\finh \oplus \K c  =\heis^- \+\affh \+\heis^+,
$$
where $\heis^\pm =\oplus_{n\in \pm\N} t^n \otimes \finh$. The Lie
algebra $\heis$ has a large center spanned by $c, t^{pn} \otimes
\finh$ for $n\in \Z$.

The $p$-center $\ZZ_0 (\heis)$ of $U(\heis)$ is generated by $x^p
-x^{[p]}$ for all $x\in \heis$ and the proper $p$-center $\ZZ_0' (\heis)$ of $U(\heis)$ is
by definition the subalgebra generated by $x^p
-x^{[p]}$ for all $x\in L\finh$. The whole center of $U(\heis)$ is
generated by $\ZZ_0 (\heis)$ and $c, t^{pn} \otimes
\finh$ for $n\in \Z$, though  this fact will not be needed below.

\subsection{Vertex algebras in prime characteristic}
The usual notion of vertex algebras can be readily made sense over
the field $\K$ of characteristic $p>0$ (cf. Borcherds-Ryba
\cite{BR}). All one needs is to use the divided power of the
translation operator $T^{(i)} =T^i/i!, i\ge 1$ and noting that
$$
Y(T^{(i)} a, z) = \der^{(i)} Y(a, z),
$$
where $\der^{(i)}$ denotes the $i$th divided power of the derivative
with respect to $z$.

Denote $L_+\fing =\sum_{n \in\Z_+} t^n \otimes \fing$. It is well known that the vacuum
$\affg$-module of level $\kappa \in \K$
$$
\Vg = U(\affg) \bigotimes_{U(L_+\fing  +\K c)} \K_\kappa
$$
carries a canonical structure of a vertex algebra (cf. e.g.
\cite{Fr2}), where $L\fing^+$ acts on $\K_\kappa =\K$ trivially and $c$ as
scalar $\kappa$. Denote by $\vac =1\otimes 1$ the vacuum vector
in $\Vg$.

\subsection{The $p$-centers of modular vertex algebras}

Let
$$
x(z) =\sum_{n\in\Z} x_n z^{-n-1}, \qquad x \in \fing.
$$
The following lemma on vertex operators is standard (cf.
\cite{Fr2}), except the divided power notation.

\begin{lemma} \label{lem:state-field}
The following formulas hold in the vertex algebra $\Vg$:  
\begin{eqnarray}
Y(x_{-r} \vac, z)= \partial^{(r-1)} x(z)  &=& \sum_{n\in\Z}
\binom{-n-1}{r-1}x_n z^{-n-r},  \label{statefield} \\
Y(x_{-r_1} y_{-r_2}\cdots  \vac, z) &=& \no
\partial^{(r_1-1)} x (z)\;\partial^{(r_2-1)} y (z) \cdots\no  \nonumber
\end{eqnarray}
for $x,y,
\ldots \in \fing$, and  $r, r_1, r_2, \ldots \in \N.$
\end{lemma}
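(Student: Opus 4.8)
The plan is to derive both identities from the defining state--field correspondence $Y(x_{-1}\vac, z) = x(z)$ together with the divided-power translation identity $Y(T^{(i)}a, z) = \partial^{(i)} Y(a,z)$ recorded just above. For the first formula, I would invoke the creation axiom of a vertex algebra in its characteristic-free divided-power form $Y(a,z)\vac = \sum_{i\ge 0} z^{i}\, T^{(i)}a$, which upon extracting the coefficient of $z^{r-1}$ gives $x_{-r}\vac = (x_{-1}\vac)_{(-r)}\vac = T^{(r-1)}(x_{-1}\vac)$. Applying $Y(-,z)$ and the translation identity then yields $Y(x_{-r}\vac, z) = \partial^{(r-1)}x(z)$. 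To obtain the explicit series it remains to differentiate $x(z) = \sum_n x_n z^{-n-1}$ term by term, using that the divided-power derivative acts on monomials by $\partial^{(r-1)}z^{-n-1} = \binom{-n-1}{r-1}z^{-n-r}$; here $\binom{-n-1}{r-1}$ denotes the integer $m(m-1)\cdots(m-r+2)/(r-1)!$ with $m=-n-1$, read in $\K$.

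For the normally ordered product formula I would argue by induction on the number of tensor factors, the base case being $Y(\vac,z)=\Id$ (equivalently, the one-factor case just treated). The inductive step rests on the $n$-th product identity in divided-power form,
\begin{equation*}
Y\bigl(a_{(-k-1)}b, z\bigr) \;=\; \no\bigl(\partial^{(k)}Y(a,z)\bigr)\,Y(b,z)\no, \qquad k\ge 0,
\end{equation*}
which follows from $Y(a_{(-1)}b,z)=\no Y(a,z)Y(b,z)\no$ and the translation-operator relation $a_{(-k-1)} = (T^{(k)}a)_{(-1)}$ on modes; as both ingredients involve only integer coefficients and divided powers, they remain valid over $\K$. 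Taking $a=x$, $k=r_1-1$ and $b = y_{-r_2}\cdots\vac$ gives $Y(x_{-r_1}y_{-r_2}\cdots\vac, z) = \no(\partial^{(r_1-1)}x(z))\,Y(b,z)\no$; the inductive hypothesis identifies $Y(b,z)$ with the iterated normal ordering $\no\partial^{(r_2-1)}y(z)\cdots\no$, and the associativity $\no A(z)\,(\no B(z)C(z)\no)\no = \no A(z)B(z)C(z)\no$ of the iterated normal ordered product closes the induction.

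Since the statement is classical over $\C$, the only point requiring attention is the bookkeeping of divided powers: every factorial that appears in the characteristic-zero computation --- in iterating $[T,x_n]=-nx_{n-1}$, in the $n$-th product formula, and in term-by-term differentiation --- must be absorbed into a divided power so that one never divides by $p$. This is exactly what the conventions $T^{(i)}=T^i/i!$ and $\partial^{(i)}$ are designed to do, so no real obstacle arises; the proof is a faithful transcription of the characteristic-zero argument with these conventions in force, and I would write it up by spelling out the two or three elementary checks indicated above.
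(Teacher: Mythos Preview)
Your argument is correct. The paper does not actually prove this lemma; it simply records it as ``standard (cf.\ \cite{Fr2}), except the divided power notation'' and moves on. Your write-up supplies precisely the details the paper omits, and does so in the natural way: the creation identity $Y(a,z)\vac=\sum_{i\ge0}z^iT^{(i)}a$ gives $x_{-r}\vac=T^{(r-1)}(x_{-1}\vac)$, the translation axiom then yields $\partial^{(r-1)}x(z)$, and the normally ordered formula follows from the $(-1)$-product identity together with $(T^{(k)}a)_{(-1)}=a_{(-k-1)}$.

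One terminological quibble: what you call ``associativity'' of the iterated normal product is really the \emph{definition} of the multi-factor symbol $\no A_1(z)A_2(z)\cdots A_n(z)\no$ as the right-nested expression $\no A_1(z)\bigl(\no A_2(z)\cdots A_n(z)\no\bigr)\no$; genuine associativity of normal ordering fails in general. Since that convention is exactly what the lemma's notation intends, your induction is fine as written---just phrase it as invoking the convention rather than an identity.
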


\begin{lemma} \label{lem:pVO}
The following identities hold for the vertex algebra $\Vg$:
for $x\in \fing$ and $r\ge 1$, we have
\begin{eqnarray}
Y ( x_{-rp}\vac, z ) = \der^{(rp-1)} x(z)& =& \sum_{n\in \Z}
\binom{-n-1}{r-1} x_{np} \, z^{-np-rp},  \label{Ypower}
 \\
Y (x_{-r}^p \vac, z ) = \no (\der^{(r-1)} x(z))^p \no &=&
\sum_{n\in\Z} \binom{-n-1}{r-1} x_n^p \, z^{-np-rp}. \label{Yp}
 \end{eqnarray}
\end{lemma}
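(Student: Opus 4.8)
The plan is to derive each of \eqref{Ypower} and \eqref{Yp} from Lemma~\ref{lem:state-field}, the only genuinely new inputs being two elementary facts about binomial coefficients modulo $p$. For \eqref{Ypower}: the first equality $Y(x_{-rp}\vac,z)=\der^{(rp-1)}x(z)$ is just \eqref{statefield} with $r$ replaced by $rp$, which also gives $\der^{(rp-1)}x(z)=\sum_{n\in\Z}\binom{-n-1}{rp-1}x_n z^{-n-rp}$; so the content is to reduce the coefficient $\binom{-n-1}{rp-1}$ modulo $p$. Write $n=mp+s$ with $m\in\Z$ and $0\le s\le p-1$, so that $-n-1=p(-m-1)+(p-1-s)$ with $0\le p-1-s\le p-1$, while $rp-1=p(r-1)+(p-1)$. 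Using $(1+X)^p=1+X^p$ in $\K[X]$ one has $(1+X)^{-n-1}=(1+X^p)^{-m-1}(1+X)^{p-1-s}$ in $\K[[X]]$, and comparing the coefficients of $X^{rp-1}$ on the two sides yields the one-digit Lucas congruence $\binom{-n-1}{rp-1}\equiv\binom{-m-1}{r-1}\binom{p-1-s}{p-1}\pmod p$. Since $\binom{p-1-s}{p-1}$ equals $1$ for $s=0$ and $0$ otherwise, only the modes $x_{mp}$ survive, and relabelling the summation index gives \eqref{Ypower}.

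For \eqref{Yp}: the second formula in Lemma~\ref{lem:state-field} (with all slots equal to $x$ and all $r_i=r$) gives $Y(x_{-r}^p\vac,z)=\no\,b(z)^p\,\no$, where $b(z):=\der^{(r-1)}x(z)=\sum_{n\in\Z}\binom{-n-1}{r-1}x_n z^{-n-r}$, so it remains to compute this normally ordered $p$-th power. The structural point is that for a single $x\in\fing$ the bracket $[x_m,x_n]=m\,\delta_{m+n,0}\l x,x\r\kappa$ is a scalar, and it vanishes whenever $m$ and $n$ have the same sign; hence, splitting $b(z)=b(z)_{+}+b(z)_{-}$ with $b(z)_{+}:=\sum_{n<0}\binom{-n-1}{r-1}x_n z^{-n-r}$ and $b(z)_{-}:=\sum_{n\ge 0}\binom{-n-1}{r-1}x_n z^{-n-r}$, each of $b(z)_{\pm}$ is a sum of pairwise commuting operators. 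Unravelling the iterative definition of normal ordering by induction on $p$ gives $\no\,b(z)^p\,\no=\sum_{k=0}^{p}\binom{p}{k}b(z)_{+}^{\,k}b(z)_{-}^{\,p-k}$, which collapses to $b(z)_{+}^{\,p}+b(z)_{-}^{\,p}$ because $\binom{p}{k}\equiv 0\pmod p$ for $0<k<p$. Finally, inside each commuting family the freshman's dream applies (that is, $(\sum y_i)^p=\sum y_i^p$ for commuting $y_i$, the mixed multinomial coefficients being divisible by $p$), so $b(z)_{+}^{\,p}+b(z)_{-}^{\,p}=\sum_{n\in\Z}\binom{-n-1}{r-1}^{\,p}x_n^{\,p}z^{-np-rp}$; and since each coefficient $\binom{-n-1}{r-1}$ lies in the prime subfield $\mathbb F_p\subset\K$, being the reduction of an integer, its $p$-th power equals itself, which is precisely \eqref{Yp}.

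The step I expect to require the most care is the identity $\no\,b(z)^p\,\no=\sum_{k=0}^{p}\binom{p}{k}b(z)_{+}^{\,k}b(z)_{-}^{\,p-k}$: one must check that the standard recursive normally ordered product of the $p$ copies of $b(z)$ coincides with the naive rearrangement that moves all creation modes to the left, and this is exactly where one uses that modes of $b(z)$ of the same sign commute on the nose, so that no central corrections are generated during the rearrangement. Once this is granted, everything else --- namely $\binom{p}{k}\equiv 0\pmod p$ for $0<k<p$, the divisibility of the mixed multinomial coefficients by $p$, and $c^p=c$ for $c\in\mathbb F_p$ --- is completely elementary, and \eqref{Ypower} is entirely routine given the one-digit Lucas congruence.
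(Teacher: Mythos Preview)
Your proof is correct and matches the paper's approach almost line by line: the paper likewise invokes Lucas' congruence (its Lemma~\ref{binom}) for \eqref{Ypower}, and for \eqref{Yp} proves the same binomial expansion $\no A(z)^m\no=\sum_{i}\binom{m}{i}A_+(z)^iA_-(z)^{m-i}$ by induction on $m$ (not ``on $p$'', a harmless slip in your write-up) before collapsing via $\binom{p}{i}\equiv 0$ and the freshman's dream. One small remark: that binomial expansion for $\no A^m\no$ follows directly from the recursion $\no A^m\no=A_+\no A^{m-1}\no+\no A^{m-1}\no A_-$ with no commutativity hypothesis at all, so the same-sign commutativity of the modes is actually needed only at the subsequent freshman's-dream step $A_\pm(z)^p=\sum_n\binom{-n-1}{r-1}x_n^p z^{-np-rp}$, not where your final paragraph locates it.
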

The special case of Lemma~\ref{lem:pVO} for $r=1$ reads:
\begin{eqnarray}
Y (x_{-p}\vac, z ) = \der^{(p-1)} x(z)& =& \sum_{n\in \Z} x_{np}\,
z^{-np-p},
 \\
Y ( x_{-1}^p  \vac, z ) = \no x(z)^p \no &=& \sum_{n\in\Z} x_n^p
\, z^{-np-p} .
 \end{eqnarray}

To prove Lemma~\ref{lem:pVO}, we shall need the following classical formula.
\begin{lemma} \label{binom}
For $a = a_0 +pa' \in \Z_{\ge 0}, b = b_0 +pb'$ with $0\le a_0, b_0 \le p-1$ and
$a' \ge 0$, we have
$$\binom{b}{a} \equiv  \binom{b'}{a'} \binom{b_0}{a_0} \mod p.
$$
%
(All the $a$'s and $b$'s involved are integers.)
\end{lemma}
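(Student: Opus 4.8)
The statement is the one-digit instance of Lucas' theorem, extended to allow a negative upper index, and the plan is to prove it by a generating-function argument in a ring of formal power series. First I would recall that for every $b\in\Z$ and $a\in\Z_{\ge 0}$ the generalized binomial coefficient $\binom{b}{a}=b(b-1)\cdots(b-a+1)/a!$ is an integer, and that in $\Z[[X]]$ one has the identity $(1+X)^b=\sum_{a\ge 0}\binom{b}{a}X^a$: for $b\ge 0$ the right-hand side is a polynomial, while for $b<0$ the symbol $(1+X)^b$ denotes the inverse in $\Z[[X]]$ of $(1+X)^{-b}$, which exists because $1+X$ is a unit in $\Z[[X]]$. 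Reducing this identity modulo $p$ gives the same identity in $\mathbb{F}_p[[X]]$, the reduction map $\Z[[X]]\to\mathbb{F}_p[[X]]$ being a ring homomorphism (in particular carrying units to units).

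Next I would invoke the Frobenius identity $(1+X)^p\equiv 1+X^p\pmod p$ together with the factorization $(1+X)^b=(1+X)^{b_0}\bigl((1+X)^p\bigr)^{b'}$, which holds in $\Z[[X]]$ even when $b'<0$ since $(1+X)^p$ is a unit there. Reducing modulo $p$ then yields $(1+X)^b\equiv(1+X)^{b_0}(1+X^p)^{b'}\pmod p$ in $\mathbb{F}_p[[X]]$, where $(1+X^p)^{b'}=\sum_{k\ge 0}\binom{b'}{k}X^{pk}$.

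Finally I would compare the coefficient of $X^a=X^{a_0+pa'}$ on the two sides. On the left it is $\binom{b}{a}$. On the right, the factor $(1+X)^{b_0}$ is a polynomial supported in degrees $0,\dots,b_0\le p-1$, so the only monomial of the form $X^j X^{pk}$ arising from $(1+X)^{b_0}\cdot\sum_{k}\binom{b'}{k}X^{pk}$ that equals $X^{a_0+pa'}$ is the one with $j=a_0$ and $k=a'$; here uniqueness uses precisely the hypotheses $0\le a_0\le p-1$ and $0\le b_0\le p-1$, since $j\equiv a_0\pmod p$ with $0\le j\le p-1$ forces $j=a_0$, and then $k=a'$, which is legitimate because $a'\ge 0$. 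Hence the coefficient on the right equals $\binom{b_0}{a_0}\binom{b'}{a'}$ (this being $0$ when $a_0>b_0$, consistently with $\binom{b_0}{a_0}=0$ in that range), and the asserted congruence follows.

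The argument is essentially self-contained, and I do not expect a genuine obstacle; the only point requiring a moment's care is the bookkeeping for a negative upper index $b$, namely that one must work inside $\mathbb{F}_p[[X]]$ rather than a polynomial ring and that reduction modulo $p$ commutes with forming inverses of units. One could alternatively handle the case $b<0$ separately by means of the reflection identity $\binom{b}{a}=(-1)^a\binom{a-b-1}{a}$, but treating all integers $b$ uniformly via power series is cleaner.
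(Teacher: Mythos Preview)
Your proof is correct. The paper does not actually prove this lemma: it is introduced as a ``classical formula'' and simply stated without argument, so there is no approach in the paper to compare against. Your generating-function argument via $(1+X)^b$ in $\mathbb{F}_p[[X]]$ is the standard proof of Lucas' theorem, and your handling of negative $b$ through units in the power-series ring is the clean way to cover the case the paper needs (namely $b=-n-1$ with $n\in\Z$). Nothing further is required.
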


\begin{proof}[Proof of Lemma~\ref{lem:pVO}]
By Lemma~\ref{binom}, we obtain that $\binom{-m-1}{rp-1} \equiv 0 \mod p$ if $p
\nmid m$, and $\binom{-np-1}{rp-1} \equiv \binom{-n-1}{r-1} \mod p$  for
$n\in\Z$. Now \eqref{Ypower} follows from \eqref{statefield}.

We write $A(z) \equiv  \der^{(r-1)} x(z) =\sum_{n\in\Z}
\binom{-n-1}{r-1} x_n z^{-n-r} =A_+(z) + A_-(z)$, where $A_\pm (z)
=\sum_{n \lesseqgtr -r}\binom{-n-1}{r-1} x_n z^{-n-r}$. By the
definition of normal ordered product and induction on $m \geq 1$,
we have
\begin{eqnarray*}
\no A(z)^m \no &=& A_+(z) \no A(z)^{m-1} \no + \no A(z)^{m-1} \no
A_-(z) \\
&=& \sum_{i=0}^m \binom{m}{i} A_+(z)^i A_-(z)^{m-i}.
\end{eqnarray*}
Note that $A_+(z)^p = \sum_{n \le -r} \binom{-n-1}{r-1} x_n^p
z^{-np-p}$ since $x_n$ with $n<0$ commute and $b^p =b$ for $b\in
\mathbb F_p$. Similarly, $A_-(z)^p =\sum_{n\ge 0} \binom{-n-1}{r-1} x_n^p
z^{-np-p}$. Hence, $ \no A(z)^p \no = A_+(z)^p + A_-(z)^p$,
whence \eqref{Yp}.
\end{proof}

\begin{remark}
Lemmas~\ref{lem:state-field} and \ref{lem:pVO} are applicable to
other modular vertex algebras, e.g. $\FF$.
\end{remark}

Denote
$$
\iota (x_n) = x_n^p -x_{np}^{[p]}, \quad x\in \fing.
$$
We also denote $\iota (z) =z^p$ (the Frobenius morphism). In the next
proposition, which follows directly from Lemmas~\ref{lem:state-field}
and \ref{lem:pVO}, we formulate a basic property of affine vertex algebras.

\begin{proposition} [Commutativity of $\iota$ and $Y$]
 \label{prop:commYi}
For $x\in \fing$ and $r\ge 1$, we have
 \begin{eqnarray}  \label{p-cent-field}
Y( \iota (x_{-r} )\vac, z)
 &=& Y \left( (x_{-r}^p -(x^{[p]})_{-rp})\vac, z \right) \nonumber   \\
 &=& \left (\partial^{(r-1)} x(z) \right )^p
 -  \partial^{(rp-1)} x^{[p]}(z)= \iota \left(\partial^{(r-1)} x(z) \right).
\end{eqnarray}
\end{proposition}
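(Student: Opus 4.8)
The plan is to unwind both sides of \eqref{p-cent-field} directly from Lemmas~\ref{lem:state-field} and \ref{lem:pVO} and then compare the two resulting mode expansions coefficientwise. The leftmost equality in \eqref{p-cent-field} is merely the definition of $\iota$ together with the $p$-power map of $\affg$, which gives $(x_{-r})^{[p]}=(x^{[p]})_{-rp}$; hence $\iota(x_{-r})\vac=x_{-r}^p\vac-(x^{[p]})_{-rp}\vac$ in $\Vg$, and by additivity of the state--field correspondence it suffices to compute $Y(x_{-r}^p\vac,z)$ and $Y\bigl((x^{[p]})_{-rp}\vac,z\bigr)$ separately. For the former, formula \eqref{Yp} of Lemma~\ref{lem:pVO} yields $Y(x_{-r}^p\vac,z)=\no(\der^{(r-1)}x(z))^p\no=\sum_{n\in\Z}\binom{-n-1}{r-1}x_n^p\,z^{-np-rp}$. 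For the latter, formula \eqref{Ypower} applied with $x$ replaced by $x^{[p]}$ yields $Y\bigl((x^{[p]})_{-rp}\vac,z\bigr)=\der^{(rp-1)}x^{[p]}(z)=\sum_{n\in\Z}\binom{-n-1}{r-1}(x^{[p]})_{np}\,z^{-np-rp}$.

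The key observation is that these two series run over the same index $n$, are supported on the same monomials $z^{-np-rp}$, and carry the same scalar $\binom{-n-1}{r-1}$. Subtracting them term by term therefore gives
\[
Y(\iota(x_{-r})\vac,z)=\sum_{n\in\Z}\binom{-n-1}{r-1}\bigl(x_n^p-(x^{[p]})_{np}\bigr)z^{-np-rp}=\sum_{n\in\Z}\binom{-n-1}{r-1}\iota(x_n)\,z^{-np-rp}.
\]
Reading the right-hand side through the conventions $\iota(z)=z^p$ and $\iota(x_n)=x_n^p-(x^{[p]})_{np}$, and using that the coefficients $\binom{-n-1}{r-1}$ lie in $\mathbb{F}_p$ and are hence fixed by the $p$-th power, this last expression is precisely $\iota\bigl(\der^{(r-1)}x(z)\bigr)$. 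This completes the chain \eqref{p-cent-field}.

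I expect no genuine obstacle: granted Lemmas~\ref{lem:state-field} and \ref{lem:pVO}, the proposition is essentially a one-line calculation. The only point demanding care is the bookkeeping already handled inside Lemma~\ref{lem:pVO}: one must know that \eqref{Yp} and \eqref{Ypower} produce expansions indexed in compatible ways --- $x_n$ replaced by $x_n^p$ in one case and the mode index $n$ replaced by $np$ in the other, with the uniform $z$-exponent $-np-rp$ and coefficient $\binom{-n-1}{r-1}$. This alignment rests on the congruences of Lemma~\ref{binom} (a form of Lucas' theorem) --- namely $\binom{-m-1}{rp-1}\equiv 0$ when $p\nmid m$ and $\binom{-np-1}{rp-1}\equiv\binom{-n-1}{r-1}$ --- so once those are in place the subtraction is coefficientwise and the identification with $\iota\bigl(\der^{(r-1)}x(z)\bigr)$ is immediate.
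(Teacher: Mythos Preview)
Your proof is correct and follows exactly the approach the paper indicates: the paper simply states that the proposition ``follows directly from Lemmas~\ref{lem:state-field} and \ref{lem:pVO}'', and you have spelled out precisely that derivation, matching the mode expansions from \eqref{Yp} and \eqref{Ypower} coefficientwise and then reading off the definition of $\iota$ on fields.
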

When $r=1$, we have
\begin{eqnarray*} \label{p-cent-field1} Y( \iota
(x_{-1} )\vac, z) = \iota Y(x_{-1} \vac, z) = \sum_{n\in\Z} (x_n^p
-(x^{[p]})_{np} )\, z^{-np-p}.
\end{eqnarray*}

By definition, the center of a vertex algebra $V$ consists of all vectors $v\in V$ such that $Y(a,z)v \in V[[z]]$ for all $a\in V$. 
The center of a vertex algebra is a commutative vertex algebra (cf. \cite{Fr2}).

\begin{definition}
The {\em $p$-center} (or the {\em Frobenius center}) $\zz_0 (\Vg)$ of the vertex algebra $\Vg$ is
defined to be the subspace $\ZZ_0'(\affg) \vac \subset \Vg$.
\end{definition}
Clearly these $p$-centers (and other $p$-centers below) are vertex subalgebras of the centers of the
corresponding vertex algebras. 

\begin{proposition} \label{prop:resVA}
\begin{enumerate}
\item The $p$-center $\zz_0 (\Vg)$ is a commutative vertex
subalgebra of $\Vg$.

\item  $U(\affg) \cdot \zz_0(\Vg)$ is an ideal of the
vertex algebra $\Vg$, and so the quotient 
$$
\Vgo
\stackrel{\text{def}}{=} \Vg /(U(\affg) \cdot \zz_0(\Vg))
$$ 
carries
an induced vertex algebra structure.  
\end{enumerate}
\end{proposition}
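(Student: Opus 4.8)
For part~(1) the key input is the classical fact that, for a restricted Lie algebra, the elements $\xi^{p}-\xi^{[p]}$ are central in the enveloping algebra; thus $\ZZ_0'(\affg)\subseteq\ZZ_0(\affg)$ is a central subalgebra of $U(\affg)$ (cf.\ \cite{Jan}). I would first show that $\zz_0(\Vg)=\ZZ_0'(\affg)\vac$ lies in the center $\zz(\Vg)$ of $\Vg$: for $u\in\ZZ_0'(\affg)$, $x\in\fing$ and $n\ge 0$ one has $x_{n}(u\vac)=u\,(x_{n}\vac)=0$, since $x_{n}$ commutes with $u$ and $x_{n}\vac=0$; as $\Vg$ is strongly generated by the currents $x(z)$, $x\in\fing$, this already forces $a_{(m)}(u\vac)=0$ for all $a\in\Vg$ and $m\ge 0$, i.e.\ $u\vac\in\zz(\Vg)$ (cf.\ \cite{Fr2}). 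Recall that $\zz(\Vg)$ is a commutative vertex algebra, hence a commutative associative unital differential algebra under the product $a\cdot b:=a_{(-1)}b$ and the divided-power translation operators $T^{(i)}$, in which $a_{(m)}b=0$ for $m\ge 0$ and $a_{(-1-i)}b=T^{(i)}(a)\cdot b$. It therefore suffices to show that $\zz_0(\Vg)$ contains $\vac$ (clear, as $1\in\ZZ_0'(\affg)$), is closed under $\cdot$\,, and is stable under every $T^{(i)}$.

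This is where Proposition~\ref{prop:commYi} enters. It shows that for a generator $v=\iota(x_{-r})\vac$ of $\zz_0(\Vg)$, \emph{every} Fourier coefficient of $Y(v,z)$ is a scalar multiple of some $\iota(x_{m})\in\ZZ_0'(\affg)$ (or zero); in particular its $z^{0}$-coefficient is the operator $\iota(x_{-r})\in U(\affg)$, i.e.\ $v_{(-1)}=\iota(x_{-r})$, while $T^{(i)}(v)=v_{(-1-i)}\vac$ is a scalar multiple of some $\iota(x_{m})\vac\in\zz_0(\Vg)$. Since $\zz_0(\Vg)$ is spanned by $\vac$ together with the products $\iota(x^{1}_{-r_{1}})\cdots\iota(x^{k}_{-r_{k}})\vac$ (a generator $\iota(x_{m})$ with $m\ge 0$ annihilates $\vac$, and the $\iota$'s commute in $U(\affg)$), an induction on $k$ --- using $v_{(-1)}=\iota(x_{-r})$ together with the divided Leibniz rule for the $T^{(i)}$ --- shows that $\zz_0(\Vg)$ is closed under $\cdot$ (indeed $\iota(x^{1}_{-r_{1}})\vac\cdot\cdots\cdot\iota(x^{k}_{-r_{k}})\vac=\iota(x^{1}_{-r_{1}})\cdots\iota(x^{k}_{-r_{k}})\vac$) and under every $T^{(i)}$; hence $\zz_0(\Vg)$ is a commutative vertex subalgebra of $\Vg$.

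For part~(2), set $J:=U(\affg)\cdot\zz_0(\Vg)$, an $\affg$-submodule of $\Vg$ by construction. It is $T$-stable: for $\xi\in U(\affg)$ and $v\in\zz_0(\Vg)$ one has $T(\xi v)=\xi(Tv)+[T,\xi]v$, where $Tv\in\zz_0(\Vg)$ by part~(1) and $[T,\xi]$ again lies in the image of $U(\affg)$ in $\End(\Vg)$ (because $[T,x_{m}]=-mx_{m-1}$), so both terms lie in $U(\affg)\cdot\zz_0(\Vg)=J$. Next, by Lemma~\ref{lem:state-field} every vertex operator $Y(a,z)$ with $a\in\Vg$ is a normal-ordered product of derivatives of the currents $x(z)$, $x\in\fing$; hence each Fourier mode $a_{(n)}$ acts on any vector by a finite $\K$-linear combination of monomials in the modes $x_{m}$, and therefore preserves every $\affg$-submodule of $\Vg$, in particular $J$. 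Thus $J$ is an ideal of the vertex algebra $\Vg$, and $\Vgo=\Vg/J$ carries an induced vertex algebra structure.

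The step I expect to demand the most care is the closure of $\zz_0(\Vg)$ under $\cdot$ and under the $T^{(i)}$ in part~(1): the normal-ordered product of a vertex algebra is neither commutative nor associative in the naive sense, and the computation goes through only because $\zz_0(\Vg)$ sits inside the center, where $\cdot$ is commutative and associative, and because Proposition~\ref{prop:commYi} identifies all the modes of $Y(\iota(x_{-r})\vac,z)$ with genuine central elements of $U(\affg)$. Part~(2) is then essentially formal, once one records that every mode of every vertex operator on $\Vg$ is built out of the $\fing$-modes.
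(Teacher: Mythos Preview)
Your proposal is correct and follows essentially the same route as the paper: both arguments hinge on Proposition~\ref{prop:commYi}, which identifies the Fourier modes of $Y(\iota(x_{-r})\vac,z)$ with the central elements $\iota(x_m)\in\ZZ_0'(\affg)$, so that products and translates stay inside $\ZZ_0'(\affg)\vac$. Your write-up is simply more detailed—you make explicit the inclusion $\zz_0(\Vg)\subset\zz(\Vg)$ and spell out part~(2), whereas the paper argues directly that all modes of $Y(x,z)$ for $x\in\zz_0(\Vg)$ lie in $\ZZ_0'(\affg)$ and dismisses part~(2) as an easy consequence of~(1).
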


\begin{proof}
Part (2) follows from (1) easily, and we will prove (1).

Observe that the Fourier components of \eqref{p-cent-field} are of
the form $x_n^p -(x^{[p]})_{np}$ (up to a scalar multiple), and
hence belong to the $p$-center $\zz_0 (\Vg)$. By definition, the
$p$-center $\zz_0 (\Vg)$ is spanned by elements of the form $x =
\iota(a_{-r_1} b_{-r_2} \cdots \vac)$ with $a,b\ldots \in \fing$
and $r_1,r_2,\ldots >0$. By Proposition~\ref{prop:commYi}, the vertex operator
$$Y(x,z) = \no Y(\iota(a_{-r_1}) \vac,z) Y(\iota(b_{-r_2})\vac,z)
\cdots\no$$ is a linear combination of operators composed from
those of the form $x_n^p -(x^{[p]})_{np}$, and hence clearly
preserves $\zz_0 (\Vg)$.
\end{proof}

Following the standard terminology in the theory of modular Lie algebras,
we shall refer to the vertex algebras $\Vgo$ as the
{\em restricted} (or more precisely {\em $p$-restricted}) vertex
algebras associated to $\affg$.

\begin{remark}
Since $c^p-c \not \in \ZZ_0'(\affg)$ by definition, the central charges for the restricted vertex
algebras $\Vgo$  can be any scalar in $\K$. 
\end{remark}

A {\em baby Verma $\affg$-module} (associated to a weight $\la$ on $\affh$ of level $\kappa$)
is a $\affg$-module of the form
$$
V(\la) \equiv V^\kappa(\la) = \mf u_0(\affg) \bigotimes_{\mf u_0(\affn +\affh)}
\K_{{\la}}
$$
where $\affn$ acts trivially on the one-dimensional space
$\K_{{\la}} \cong \K$  and $\affh$ acts by the weight ${\la} \in \affh^*$. 
These baby Verma modules are modules of the restricted vertex algebra $\Vgo$. 

\section{The baby Wakimoto modules}
 \label{sec:Wakimoto}
\subsection{A vertex algebra $\Mg$}

Let $\mathcal A^\affg$ be the Weyl algebra over $\K$ with
generators $a_{\al,n}, a^*_{\al,n}$ with $\al \in \ov{\Delta}_+,
n\in \Z$, and relations
$$
[a_{\al,n}, a^*_{\beta,m}] =\delta_{\al,\beta} \delta_{n,-m},
 \quad [a_{\al,n}, a_{\beta,m}] =
 [a^*_{\al,n}, a^*_{\beta,m}] =0.
$$
A restricted Lie algebra structure on $\mathcal A^\affg$ is given
as follows:
$$
a_{\alpha,n}^{[p]} = (a^*_{\al,n})^{[p]} =0,\quad n\in \Z.
$$

Introduce the fields
$$a_\al (z) =\sum_{n\in\Z}a_{\al,n} z^{-n-1}, \quad
 a^*_\al (z) =\sum_{n\in\Z}a^*_{\al,n} z^{-n}, \qquad\al \in \ov{\Delta}_+.
$$

Let $\Mg$ be the Fock representation of $\mathcal A^\affg$
generated by $\vac$ such that
$$a_{\al,n} \vac =0, \quad n\ge 0; \qquad
 a^*_{\al,n} \vac =0, \quad n > 0.
$$
As a vector space, $\Mg \cong \K [a_{\al,n-1}, a^*_{\al,n}]_{\al
\in \bar \Delta_+, n\le 0}$. It is well known that $\Mg$ carries a
vertex algebra structure with state-field correspondence
\begin{align*}
Y& (a_{\al_1,-r_1}\cdots a_{\al_k,-r_k} a^*_{\beta_1,-s_1} \cdots
a^*_{\beta_m,-s_m} \vac) \\
 &= \no \der^{(r_1-1)} a_{\al_1}(z) \cdots  \der^{(r_k-1)} a_{\al_k}(z)
  \der^{(s_1)} a^*_{\beta_1}(z) \cdots \der^{(s_m)}
  a^*_{\beta_m}(z)\no
\end{align*}
and with the translation operator $T$ such that
$$T\vac =0, \; [T, a_{\al,n}] =-na_{\al,n-1},\,
 [T, a^*_{\al,n}] =-(n-1) a^*_{\al,n-1}.
$$

\begin{proposition} \label{pcenter=center}
\begin{enumerate}
\item The $p$-center $\mathcal Z_0 (\mathcal A^\affg)$ is equal to
$\K [a_{\al,n}^p, (a^*_{\al,n})^p]_{\al \in \bar \Delta_+, n\in
\Z}$; and moreover, $\mathfrak z_0 (\Mg) \cong \K [a_{\al,n-1}^p,
(a^*_{\al,n})^p]_{\al \in \bar \Delta_+, n\le 0}$.

\item The space $U(\mathcal A^\affg) \cdot \zz_0(\Mg)$ is an ideal
of the vertex algebra $\Mg$, so the quotient 
$$
\Mgo := \Mg
/(U(\mathcal A^\affg) \cdot \zz_0(\Mg))
$$ 
carries an induced vertex
algebra structure.
\end{enumerate}
\end{proposition}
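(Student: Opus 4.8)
The plan is to prove Proposition~\ref{pcenter=center} in close analogy with the affine case treated in Proposition~\ref{prop:resVA}, so I would first dispose of the easier structural part (2) and then concentrate on the explicit description in part (1).

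For part (1), I would begin with the Weyl algebra $\mathcal A^\affg$ itself. Since $a_{\al,n}^{[p]} = (a^*_{\al,n})^{[p]}=0$, the $p$-center $\mathcal Z_0(\mathcal A^\affg)$ is by definition generated by the elements $a_{\al,n}^p$ and $(a^*_{\al,n})^p$. I would check that each such element is genuinely central: because $[a_{\al,n},a^*_{\beta,m}]=\delta_{\al,\beta}\delta_{n,-m}$ is a scalar, we have $[\,b,\,a^p\,]=p\,a^{p-1}[b,a]=0$ in characteristic $p$ for any generator $b$, so $a_{\al,n}^p$ is central, and symmetrically for $(a^*_{\al,n})^p$. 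Conversely, using the PBW-type basis of $\mathcal A^\affg$ (ordered monomials in the $a$'s and $a^*$'s) and the standard fact that the center of a Weyl algebra in characteristic $p$ is exactly the polynomial algebra on the $p$-th powers of the generators, one gets the stated equality $\mathcal Z_0(\mathcal A^\affg)=\K[a_{\al,n}^p,(a^*_{\al,n})^p]$. The second assertion of (1), the identification $\zz_0(\Mg)\cong\K[a_{\al,n-1}^p,(a^*_{\al,n})^p]_{\al\in\bar\Delta_+,\,n\le0}$, then follows by applying the $p$-center to the vacuum $\vac$: the vectors $a_{\al,n}\vac$ for $n\ge0$ and $a^*_{\al,n}\vac$ for $n>0$ vanish, so $a_{\al,n}^p\vac$ and $(a^*_{\al,n})^p\vac$ vanish for those ranges of $n$; what survives is precisely the polynomial algebra on $a_{\al,n-1}^p$ and $(a^*_{\al,n})^p$ with $n\le0$, and the PBW description of $\Mg\cong\K[a_{\al,n-1},a^*_{\al,n}]_{n\le0}$ shows no further relations are introduced, i.e.\ the map $\mathcal Z_0(\mathcal A^\affg)\to\Mg$, $z\mapsto z\vac$, has kernel exactly the left ideal generated by the annihilating generators intersected with the center.

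For part (2), I would argue exactly as in the proof of Proposition~\ref{prop:resVA}(2): once one knows that $\zz_0(\Mg)$ is a commutative vertex subalgebra of $\Mg$ — which is the content of the analogue of Proposition~\ref{prop:resVA}(1) for $\Mg$, itself an immediate consequence of the version of Proposition~\ref{prop:commYi} for the free boson fields, valid here since the state-field formula for $\Mg$ is of the same shape as \eqref{statefield} and Lemmas~\ref{lem:state-field}, \ref{lem:pVO} apply to $\Mg$ as noted in the Remark after Lemma~\ref{lem:pVO} — it follows that $Y(z,w)$ for $z\in\zz_0(\Mg)$ acts on all of $\Mg$ by operators that are polynomials in the Fourier modes $a_{\al,n}^p$ and $(a^*_{\al,n})^p$, hence commute with the $\mathcal A^\affg$-action up to central terms. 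Therefore $Y(v,w)\big(U(\mathcal A^\affg)\cdot z\vac\big)\subset U(\mathcal A^\affg)\cdot\zz_0(\Mg)[[w^{\pm1}]]$ for every $v\in\Mg$ and $z\in\zz_0(\Mg)$; together with the fact that $U(\mathcal A^\affg)\cdot\zz_0(\Mg)$ is visibly $T$-stable (since $T$ acts by the derivations given above, which send $p$-th powers to $p$-th powers times integers), this shows $U(\mathcal A^\affg)\cdot\zz_0(\Mg)$ is a (two-sided) vertex ideal, and the quotient $\Mgo$ inherits a vertex algebra structure.

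The main obstacle I anticipate is the converse inclusion in part (1), namely showing that the center of $\mathcal A^\affg$ contains \emph{nothing beyond} the polynomial algebra on the $p$-th powers of the generators. This is the one place requiring a genuine computation rather than a formal manipulation: one must pick the PBW ordering, expand a putative central element, and use the commutation relations to force all exponents of the non-$p$-th-power monomials to vanish. Since $\mathcal A^\affg$ is an (infinite) tensor product of ordinary Weyl algebras $\K[a,a^*]$ with $[a,a^*]=1$, I would reduce to the single-variable case — where $\operatorname{ad}(a^*)$ and $\operatorname{ad}(a)$ act as the divided-power derivatives $\partial/\partial a$ and $-\partial/\partial a^*$ on the polynomial algebra $\K[a,a^*]$, whose common kernel is exactly $\K[a^p,a^*{}^p]$ — and then assemble the tensor factors, being a little careful that an element of the infinite tensor product involves only finitely many factors nontrivially so the reduction is legitimate. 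Everything else is bookkeeping parallel to Section~\ref{sec:VA}.
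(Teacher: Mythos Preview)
The paper states this proposition without proof, treating it as the evident analogue of Proposition~\ref{prop:resVA} for the free-field vertex algebra $\Mg$ (the Remark after Lemma~\ref{lem:pVO} explicitly notes that Lemmas~\ref{lem:state-field} and~\ref{lem:pVO} apply to $\Mg$). Your approach to part~(2) is exactly this analogy and is correct.

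For part~(1), your argument is correct but you are proving more than is asked, and the ``main obstacle'' you anticipate is not actually present. The statement concerns the \emph{$p$-center} $\mathcal Z_0(\mathcal A^\affg)$, which by definition is the subalgebra generated by $x^p-x^{[p]}$ for $x$ ranging over the restricted Lie algebra; since the map $x\mapsto x^p-x^{[p]}$ is additive and $p$-semilinear (Jacobson's formula), it suffices to take $x$ in a basis. With $a_{\al,n}^{[p]}=(a^*_{\al,n})^{[p]}=0$ this gives generators $a_{\al,n}^p$ and $(a^*_{\al,n})^p$, and PBW shows they are algebraically independent. That is the entire content of the first equality. The ``converse inclusion'' you describe---showing the \emph{full} center of the Weyl algebra is no larger than $\K[a_{\al,n}^p,(a^*_{\al,n})^p]$---is a true and classical fact (and the label \texttt{pcenter=center} suggests the authors have it in mind), but it is not what the proposition asserts and no argument for it is required. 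So your single-variable reduction via $\ker(\ad a)\cap\ker(\ad a^*)$ is sound but superfluous here.
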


\subsection{Realization of the contragredient Verma modules}

We first recall (cf. e.g. \cite[pp.135]{Fr2}) that the
contragredient Verma module of $\fing$ can be realized via its
identification with the space of regular functions $\mathcal
O_{\bar N_+}$ on the unipotent subgroup $\bar N_+$ of the algebraic
group $\bar{G}$; equivalently, this is described as follows:
let $\bar U=\bar{N}_+ \bar{B}_-/\bar{B}_-$ be the open cell of the flag variety $\mathcal B :=\bar{G} /\bar{B}_-$. 
Let  
\begin{equation}  \label{eq:phi}
\phi: U(\fing) \longrightarrow\DB(\bar U)
\end{equation}
denote the
composition of the restriction to the open cell
$\Gamma(\bar{\mathcal B}, \DB) \rightarrow \DB(\bar U)$ with an
algebra homomorphism $U(\fing) \rightarrow \Gamma(\bar{\mathcal
B}, \DB)$ where $\DB$ denotes the sheaf of crystalline
differential operators on the flag variety $\bar{\mathcal B}$ (i.e. no divided powers of differential
operators, cf. e.g., \cite{BMR}). Then
the contragredient Verma module of $\fing$ is the pullback of the $\DB(\bar U)$-module
$\mathcal O_{\bar U}$ via  the algebra
homomorphism $\phi$. The restriction $\phi |_{\fing}: \fing \rightarrow \text{Vect}_{\bar U}$
is a Lie algebra homomorphism, where $\text{Vect}_{\bar U}$ the vector fields over $\bar U$. 

%

Let us fix some coordinates $y_\g$ (and $\der_\g:=\frac{\der}{\der
y_\g}$) for the open cell $\bar U$. The following lemma is
standard.
\begin{lemma} \label{lem:pcenterWeyl}
We have $\mathcal Z_0 (\DB (U)) =\K [y_\g^p, \der_\g^p]_{\g \in
\bar \Delta_+}$.
\end{lemma}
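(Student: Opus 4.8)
The plan is to identify $\DB(\bar U)$ with a crystalline Weyl algebra on the $y_\g$ and $\der_\g$ and then compute its $p$-center by a standard commutator argument. First I would make the identification explicit: since $\bar U$ is an affine cell with coordinates $y_\g$ ($\g\in\bar\Delta_+$), the ring of crystalline differential operators $\DB(\bar U)$ is the associative $\K$-algebra generated by the commuting multiplication operators $y_\g$ together with the vector fields $\der_\g$, subject to $[\der_\g, y_\delta]=\delta_{\g\delta}$ and $[\der_\g,\der_\delta]=0$ — no divided powers appear, by the very definition of crystalline differential operators. Thus $\DB(\bar U)$ is a free $\mathcal O_{\bar U}$-module with basis the monomials $\der^{\mathbf m}=\prod_\g \der_\g^{m_\g}$, and every element has a unique normal-ordered expression $\sum_{\mathbf m} f_{\mathbf m}(y)\,\der^{\mathbf m}$ with $f_{\mathbf m}\in\K[y_\g]$.

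Next I would record the two key computations. In characteristic $p$, for a single pair one has $[\der_\g, y_\g^p] = p\,y_\g^{p-1} = 0$ and $[\der_\g^p, y_\delta] = 0$ for all $\delta$ (the latter because $\ad(\der_\g)$ is a locally nilpotent derivation and $(\ad\der_\g)^p(y_\delta)=0$ already after one or two steps, and more structurally because $\der_\g\mapsto \der_\g^p$ lands in the center of the relevant $p$-restricted structure). Hence $y_\g^p$ and $\der_\g^p$ are all central, so $\K[y_\g^p,\der_\g^p]_{\g\in\bar\Delta_+}\subseteq \ZZ_0(\DB(\bar U))$. This gives the easy inclusion.

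For the reverse inclusion I would take a central element $D=\sum_{\mathbf m} f_{\mathbf m}(y)\,\der^{\mathbf m}$ and test it against the generators. Commuting $D$ with $y_\g$ kills nothing but differentiates the symbol in the $\der_\g$ variable: using $[\der_\g^{m_\g}, y_\g] = m_\g \der_\g^{m_\g-1}$, one finds $[D,y_\g]=0$ forces every $f_{\mathbf m}$ with $p\nmid$ some exponent pattern to vanish, precisely so that $D$ is a polynomial in $y$ and the $\der_\g^p$. Symmetrically, commuting with $\der_\g$ differentiates the coefficients $f_{\mathbf m}$ with respect to $y_\g$, and $[D,\der_\g]=0$ forces $\der_{y_\g} f_{\mathbf m}=0$ for all $\mathbf m$, i.e. $f_{\mathbf m}\in\K[y_\g^p]$. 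Combining the two conditions yields $D\in\K[y_\g^p,\der_\g^p]_{\g\in\bar\Delta_+}$, completing the proof. The main obstacle — really the only point requiring care — is the bookkeeping in the commutator $[\der_\g^{m_\g},y_\g]=m_\g\der_\g^{m_\g-1}$ and its iterates, i.e. checking that the combinatorial coefficients vanish mod $p$ exactly when all relevant exponents are divisible by $p$; this is the same divisibility phenomenon already used (via Lemma~\ref{binom}) in the proof of Lemma~\ref{lem:pVO}, so it is routine here.
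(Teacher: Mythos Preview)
Your argument is correct. The paper itself offers no proof of this lemma, merely labeling it ``standard,'' so there is nothing to compare against; what you have written is precisely the standard computation one would expect.

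One small remark on presentation: the symbol $\mathcal Z_0$ in the paper denotes the $p$-center (the subalgebra generated by $x^p-x^{[p]}$), not a priori the full center. Your argument actually computes the full center of $\DB(\bar U)$ and shows it equals $\K[y_\g^p,\der_\g^p]$. Since the $[p]$-map vanishes on the generators $y_\g,\der_\g$, one has $y_\g^p,\der_\g^p\in\mathcal Z_0(\DB(\bar U))$ by definition, and hence $\K[y_\g^p,\der_\g^p]\subseteq\mathcal Z_0(\DB(\bar U))\subseteq\text{center}=\K[y_\g^p,\der_\g^p]$, so all three coincide. You might state this chain of inclusions explicitly so that the reader sees at once that you have proved the lemma as stated (and indeed slightly more). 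Alternatively, the inclusion $\mathcal Z_0\subseteq\K[y_\g^p,\der_\g^p]$ can be seen directly from Jacobson's formula, since in the Heisenberg algebra all iterated brackets of length $\geq 2$ vanish; but your route via the full center is equally valid and no longer.
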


The following is known (cf., e.g.,  \cite[\S1.3]{BMR}). 

\begin{lemma}  \label{lem:pcenters}
The restriction $\phi: \fing  \longrightarrow \text{Vect}_{\bar U}$ is a homomorphism of restricted Lie algebras, and
the homomorphism $\phi: U(\fing) \longrightarrow\DB(\bar U)$ maps the
$p$-center of $U(\fing)$ to the $p$-center of $\DB(\bar U)$. 
\end{lemma}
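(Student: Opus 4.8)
The plan is to prove Lemma~\ref{lem:pcenters} in two stages: first verify that $\phi|_{\fing}$ is a map of restricted Lie algebras, i.e. that it intertwines the $p$-operation $x \mapsto x^{[p]}$ on $\fing$ with the $p$-operation on $\mathrm{Vect}_{\bar U}$ (which sends a vector field $D$ to its $p$-th iterate $D^{[p]}$, the $p$-fold composition, which is again a derivation of $\mathcal O_{\bar U}$ in characteristic $p$); and second, deduce from this the statement about the $p$-center of the enveloping algebra. For the first stage, I would recall the concrete description of $\phi|_{\fing}$ on the open cell $\bar U = \bar N_+ \bar B_-/\bar B_-$: writing $\bar N_+$ in coordinates $y_\g$ ($\g \in \bar\Delta^+$), the module structure on $\mathcal O_{\bar U}$ is the (co)action of $\fing$ coming from left translation on $\bar G/\bar B_-$, so $\phi|_{\fing}$ is the differential of an algebraic group action. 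The key point is that the $p$-operation on $\fing = \mathrm{Lie}(\bar G)$ is compatible with the group structure, and for any action of an algebraic group $\bar G$ on a variety $X$ over $\K$, the induced Lie algebra map $\mathrm{Lie}(\bar G) \to \mathrm{Vect}_X$ is automatically a homomorphism of restricted Lie algebras — this is a standard fact (e.g. Jantzen, \cite{Jan}), because both $p$-operations are induced from the associative $p$-th power in the respective distribution/differential-operator algebras. Since $\bar\Delta^+$ is a root system spanning coordinates and $\bar G$ acts on $\bar U$ through the big cell, this gives the first claim directly; alternatively one can check it on the Chevalley generators $e_\al, f_\al, h_i$ using their explicit vector-field formulas, but invoking the general principle is cleaner.

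For the second stage, I would argue as follows. The $p$-center $\ZZ_0(U(\fing))$ is generated by the elements $x^p - x^{[p]}$ for $x \in \fing$, and $\mathcal Z_0(\DB(\bar U))$ is by Lemma~\ref{lem:pcenterWeyl} generated by $y_\g^p$ and $\der_\g^p$ for $\g \in \bar\Delta^+$, which equals the subalgebra generated by $D^p - D^{[p]}$ over all vector fields $D$ (for $D = \der_\g$ one has $D^{[p]} = 0$, so $D^p - D^{[p]} = \der_\g^p$; the functions $y_\g^p$ are central as well and lie in the image since $\phi$ extends to $\mathcal O_{\bar U}$ via the $\mathcal O$-module structure, or one notes $y_\g^p = (\mathrm{ad}\,\der_{\g}')^{\,\cdots}$ — more simply, $\mathcal Z_0(\DB(\bar U))$ is generated as an algebra by the $p$-th powers $D^p - D^{[p]}$ and the $p$-th powers of functions, and both are hit). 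Given any $x \in \fing$, apply the algebra homomorphism $\phi$ to $x^p - x^{[p]}$: since $\phi$ is an algebra map, $\phi(x^p) = \phi(x)^p$, and since $\phi|_{\fing}$ is restricted, $\phi(x^{[p]}) = \phi(x)^{[p]}$ where the right-hand side is the $p$-operation in $\mathrm{Vect}_{\bar U} \subset \DB(\bar U)$. Hence $\phi(x^p - x^{[p]}) = \phi(x)^p - \phi(x)^{[p]}$, which is a central element of $\DB(\bar U)$ by the defining property of the restricted structure (it is well known that $D^p - D^{[p]}$ is central in the enveloping algebra of a restricted Lie algebra, hence in $\DB(\bar U)$ for $D \in \mathrm{Vect}_{\bar U}$). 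Since $\ZZ_0(U(\fing))$ is generated as an algebra by the $x^p - x^{[p]}$ and $\phi$ is an algebra map, the image of the whole $p$-center lands in the $p$-center of $\DB(\bar U)$, as claimed.

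The main obstacle, such as it is, lies in the first stage: one must be careful that $\phi$ is defined via the sheaf of \emph{crystalline} differential operators (no divided powers) restricted to the open cell, and that its restriction to $\fing$ genuinely lands in honest vector fields with the composition $p$-operation, so that the phrase "$\phi$ maps the $p$-center to the $p$-center" is meaningful. Concretely this amounts to knowing that the localization/restriction map $\Gamma(\bar{\mathcal B}, \DB) \to \DB(\bar U)$ and the map $U(\fing) \to \Gamma(\bar{\mathcal B}, \DB)$ are algebra homomorphisms compatible with the restricted structures, which is exactly the content of \cite[\S1.3]{BMR} (the Azumaya/$p$-center picture for $\DB$): there $\phi$ is shown to identify $\mathcal Z_0(U(\fing))$ with functions on the Frobenius twist $\mathcal B^{(1)}$-worth of data, and in particular to carry $p$-center into $p$-center. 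So in the writeup I would: (i) recall the coordinate description of $\phi|_{\fing}$ on $\bar U$; (ii) cite the general fact that a $\bar G$-action induces a restricted Lie algebra map on vector fields to get the first claim; (iii) perform the one-line computation $\phi(x^p - x^{[p]}) = \phi(x)^p - \phi(x)^{[p]}$ and note this is central; (iv) conclude by generation of the $p$-center. Everything after (i)–(ii) is routine.
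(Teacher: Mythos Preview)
The paper does not supply its own proof of this lemma: it simply records that the statement is known and cites \cite[\S1.3]{BMR}. Your proposal is correct and is essentially the standard argument one would extract from that reference, so there is nothing to compare at the level of strategy.

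Two minor points worth tightening. First, $\bar G$ does not act on the open cell $\bar U$ (only on $\bar{\mathcal B}$), so the clean way to phrase stage~(i)--(ii) is: the group action gives a restricted Lie algebra map $\fing \to \mathrm{Vect}_{\bar{\mathcal B}}$, and restriction of vector fields to an open subset is a restricted Lie algebra homomorphism because the $p$-operation $D\mapsto D^p$ is local. Second, your parenthetical about $y_\g^p$ being ``hit'' is beside the point: you are not asked to show surjectivity onto $\mathcal Z_0(\DB(\bar U))$, only that $\phi(x^p-x^{[p]})=\phi(x)^p-\phi(x)^{[p]}$ lands there. For that, it suffices to observe that this element is central in $\DB(\bar U)$ (your argument via $\ad(D^p)=(\ad D)^p=\ad(D^{[p]})$ is fine), and that the center of the Weyl algebra in characteristic $p$ coincides with $\K[y_\g^p,\der_\g^p]$, i.e.\ with the $p$-center of Lemma~\ref{lem:pcenterWeyl}.
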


In some cases, we can make this fairly explicit as
follows. For $x \in \finn \+ \finh$, one can write
 \begin{equation}  \label{eq:phix}
 \phi (x) = \sum_{\beta \in \bar \Delta_+} c_\beta m_\beta
(y_\g) \der_\be
\end{equation}
where $c_\beta \in \K$ and $m_\beta(y_\g)$ denotes some monomials
in the variables $y_\g$, for $\g \in \bar \Delta_+$.

\begin{lemma} \label{pFormula}
Let  $x \in \finn \+ \finh$ and retain the above notation \eqref{eq:phix}. Then we have
 $$\phi (\iota (x))
 = \sum_{\beta \in \bar \Delta_+} c_\beta^p m_\beta (y_\g^p)
 \der_\be^p.$$
\end{lemma}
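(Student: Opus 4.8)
The plan is to exploit the fact that $\phi|_{\finn\oplus\finh}$ is a homomorphism of \emph{restricted} Lie algebras (Lemma~\ref{lem:pcenters}), so that the $p$-power map is respected: $\phi(x^{[p]}) = \phi(x)^{[p]}$ for $x\in\finn\oplus\finh$, where the bracket on the right is computed inside $\text{Vect}_{\bar U}$, i.e. the $p$-th power of the vector field $\phi(x)$ as a differential operator. Since $\iota(x) = x^p - x^{[p]}$ in $U(\fing)$ (taking $r=1$ in the definition of $\iota$), applying the algebra homomorphism $\phi: U(\fing)\to\DB(\bar U)$ gives $\phi(\iota(x)) = \phi(x)^p - \phi(x)^{[p]}$, where now both terms live in $\DB(\bar U)$ and $\phi(x)^p$ means the $p$-th power of the first-order operator $\phi(x)$ in the associative algebra $\DB(\bar U)$. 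So the statement reduces to the purely computational identity: if $D = \sum_{\beta} c_\beta m_\beta(y_\g)\der_\be$ is the vector field $\phi(x)$, then
$$
D^p - D^{[p]} = \sum_{\beta} c_\beta^p\, m_\beta(y_\g^p)\,\der_\be^p,
$$
where $D^{[p]}$ is the restricted $p$-power in $\text{Vect}_{\bar U}$.

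The key step is therefore a general fact about crystalline differential operators on affine space over $\K$: for any vector field $D\in\text{Vect}_{\bar U}$, the operator $D^p - D^{[p]}$ is $\mathcal O_{\bar U}$-linear (this is the standard restricted structure on vector fields, essentially Hochschild's formula / the Jacobson identity applied to derivations), and moreover it acts on functions as the "Frobenius-twisted" operator obtained by raising the coefficients to the $p$-th power. Concretely, writing the vector fields $\der_\g$ in the coordinate frame, one has $\der_\g^p - \der_\g^{[p]} = \der_\g^p - 0 = \der_\g^p$ as operators, and for $D = \sum c_\beta m_\beta \der_\be$ with coefficients $c_\beta m_\beta$ one invokes the classical identity (Hochschild): for a derivation $D = \sum f_\beta \der_\be$ of a commutative $\K$-algebra in characteristic $p$,
$$
D^p = \sum_\beta f_\beta^{(p)}\,\der_\be^p,\qquad\text{where } \sum_\beta f_\beta^{(p)}\der_\be := D^p - \big(\textstyle\sum_\beta f_\beta\der_\be\big)^{[p]}\text{-correction},
$$
more precisely $D^{[p]}$ is the derivation with $D^{[p]}(g) = D^{p}(g)$ for all $g$, and one checks $D^{[p]} = \sum_\beta D^{p-1}(f_\beta)\,\der_\be$ while $D^p = D^{[p]} + \sum_\beta f_\beta^p\,\der_\be^p$ as associative-algebra operators. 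Applying this with $f_\beta = c_\beta m_\beta(y_\g)$ and using $f_\beta^p = c_\beta^p\, m_\beta(y_\g)^p = c_\beta^p\, m_\beta(y_\g^p)$ (since raising a monomial to the $p$-th power raises each variable to the $p$-th power, and $c_\beta^p\in\K$), we get exactly $\phi(\iota(x)) = D^p - D^{[p]} = \sum_\beta c_\beta^p m_\beta(y_\g^p)\der_\be^p$.

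The main obstacle is getting the two notions of $p$-power straight and justifying the Hochschild-type identity $D^p = \sum_\beta D^{p-1}(f_\beta)\der_\be + \sum_\beta f_\beta^p\der_\be^p$ cleanly: one must be careful that $\phi(x)^{[p]}$ computed in the restricted Lie algebra $\text{Vect}_{\bar U}$ agrees with $\sum_\beta D^{p-1}(c_\beta m_\beta)\der_\be$, and that the remaining "principal symbol" term contributes $\sum_\beta f_\beta^p\der_\be^p$ with no cross terms. This is standard once one works in the associative algebra $\DB(\bar U)$ and uses that the $\der_\g$ pairwise commute and that $(\der_\g)^{[p]}=0$ matches $\der_\g^p$ acting trivially only after the $\iota$-correction — but the bookkeeping of lower-order terms (which all cancel between $D^p$ and $D^{[p]}$ by definition of the restricted structure) is the delicate point. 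Everything else — the reduction via Lemma~\ref{lem:pcenters} and the elementary identity $m_\beta(y_\g)^p = m_\beta(y_\g^p)$ — is routine.
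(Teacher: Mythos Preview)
Your approach is essentially the paper's: both amount to showing that $\phi(x)^p$ minus a first-order term lies in the $p$-center $\K[y_\gamma^p,\partial_\gamma^p]$ and therefore must equal its order-$p$ symbol $\sum_\beta f_\beta^p\,\partial_\beta^p$, with all lower-order contributions forced to cancel. Your packaging via a general Hochschild-type identity $D^p - D^{[p]} = \sum_\beta f_\beta^p\,\partial_\beta^p$ for vector fields is a clean reformulation, but the ``delicate bookkeeping'' you flag is precisely the paper's order-counting argument using Lemma~\ref{lem:pcenterWeyl}.
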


\begin{proof}
We know that
$$\phi(\iota (x)) =\phi(x)^p - \phi(x^{[p]}) =
\big (\sum c_\beta m_\beta (y_\g) \der_\be \big)^p- \phi(x^{[p]})
$$
lies in the $p$-center $\mathcal Z_0 (\DB (U))$ and
$\phi(x^{[p]})$ is a sum of differential operators of order one
(plus some possible constants). We expand this $p$th power and
move the differential operator $\der_\be$ to the right by using
commutators. Lemma~\ref{lem:pcenterWeyl} ensures all the
commutators will cancel out with each other as they would produce
differential operators of order between $1$ and $p-1$.
\end{proof}

\begin{Rem} 
Let $\mu: T^* \bar {\mathcal{B}}\rightarrow  \bar {\mathcal{N}}$ be the 
Springer resolution,
$\mu^{(1)}:T^*  \bar {\mathcal{B}}^{(1)}\rightarrow  \bar
 {\mathcal{N}}^{(1)}$
be the induced map between the corresponding Frobenius twists.
Then the same argument as in proof of Lemma 
\ref{pFormula}
shows that 
$
\phi(\iota(x))=(\mu^{(1)})^*(\bar x)|_{U}
$
(\cite[1.3.3]{BMR}),
where $\bar x$ is the image of $x\in \fing$
by the projection $\K[\fing^*]\ra \K[\bar {\mathcal{N}}]$.

\end{Rem}
\subsection{Heisenberg vertex algebra $\pi^\kappa$}

Let $\B^\h_\kappa$ be a copy of Heisenberg algebra (of the affine Lie algebra $\affg$),
with generators $\bf 1$ and $b_{i,n}$\; $(i=1,\ldots, \ell, n\in \Z)$ and  subject to the relations
$$
[b_{i,n}, b_{j,m}] =n \kappa \l h_i, h_j \r
\delta_{n,-m} {\bf 1}.
$$

Denote by $\pi^\kappa$ the vertex algebra $\K [b_{i,n}]_{1\le i\le
\ell; n<0}$ (where $\bf 1$ acts as the identity map), with 
$$
Y(b_{i,-1}, z) \equiv b_i(z) = \sum_{n<0} b_{i,n} z^{-n-1},
$$
and the translation $T$ given by
$$
T \cdot b_{i_1,n_1} \cdots b_{i_m,n_m}
 = -\sum_{j=1}^m n_j  b_{i_1,n_1} \cdots b_{i_j,n_j-1}\cdots
 b_{i_m,n_m}.
$$
A restricted Lie algebra structure on $\B^\h_\kappa$ is
given by
 $$b_{i,n}^{[p]} =b_{i, np}, \quad {\bf 1}^{[p]} ={\bf 1}, \qquad 1\le i\le \ell, n \in \Z.
 $$
The proper $p$-center of the vertex algebra $\pi^\kappa$ (which excludes ${\bf 1}^p-\bf 1$)
is then equal to
$$
\zz_0' (\pi^\kappa) = \K [b_{i,n}^p - b_{i, np}]_{1\le i\le \ell; n<0}.
$$
Denote by $\pi^\kappa_0$ the quotient vertex algebra of $\pi^\kappa$ by the ideal (in the
sense of vertex algebras) generated by $\zz_0' (\pi^\kappa)$.

For $\kappa=0$, $\pi^0$ is naturally a commutative vertex algebra.

\subsection{The Wakimoto-Feigin-Frenkel (WFF) homomorphism}

Let $\kappa_c$ denote the critical level  for $\affg$.
There exists a homomorphism of vertex algebras
$$
\ww =\ww_\kappa: \Vgc \rightarrow \Mg \otimes \pi^{\kappa-\kappa_c},
$$
which is roughly speaking an affinization of
$\phi$ defined in \eqref{eq:phi}; see \cite[Theorem~6.1.6]{Fr2}.
We shall call $\ww$ the {\em WFF homomorphism}, 
since this was introduced by Wakimoto \cite{Wak} in the $\mathfrak{sl}_2$
case and by Feigin-Frenkel \cite{FF} for general semisimple Lie algebras $\fing$.  
On generating fields, the formulas for $\ww$ read as follows:
\begin{align}
e_i(z) \mapsto & a_{\al_i}(z) +\sum_{\al_i \neq \be \in \bar{\Delta}_+} \no P_\be^i \big(a_\al^*(z)\big) a_\be(z)\no, 
 \label{eq:e}
  \\
h_i(z) \mapsto & \sum_{ \be \in \bar{\Delta}_+}  \beta(h_i) \no a_\be^*(z) a_\be(z)\no +b_i(z), 
 \label{eq:h}
  \\
f_i(z) \mapsto & \sum_{\be \in \bar{\Delta}_+} \no Q_\be^i \big(a_\al^*(z)\big) a_\be(z)\no
 + \big(c_i+(\kappa-\kappa_c)\l e_i,f_i\r \big) \partial_za_{\al_i}^*(z) + a_{\al_i}^*(z) b_i(z).
  \label{eq:f}
\end{align}
We shall take for granted that $c_i$ are integers.
The polynomial $Q_\be^i\big(a_\al^*(z)\big)$ for $\beta=\al_i$  can be determined explicitly as follows; cf. \cite{Fr2}.

\begin{lemma}  \label{lem:Qi}
We have $Q_{\al_i}^i\big(a_\al^*(z)\big) = - \no a_{\al_i}^*(z)^2 \no$.
 \end{lemma}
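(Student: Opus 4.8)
The statement $Q_{\al_i}^i\big(a_\al^*(z)\big) = -\no a_{\al_i}^*(z)^2\no$ is a statement about the $\mathfrak{sl}_2$-triple attached to the simple root $\al_i$, so the plan is to reduce to the rank-one situation and compute there. Fix $i$ and consider the $\mathfrak{sl}_2$ subalgebra $\langle e_i, h_i, f_i\rangle \subset \fing$; the coefficients $Q_\be^i$ with $\be = \al_i$, together with the piece of $P_\be^i$ and of the $h_i$-formula that survive when we restrict all other root directions, are governed entirely by $\phi$ restricted to this $\mathfrak{sl}_2$. Concretely, on the one coordinate $y := y_{\al_i}$ for the corresponding one-dimensional open cell, one has the classical realization $\phi(e_i) = \der_y$, $\phi(h_i) = -2 y\,\der_y$, $\phi(f_i) = -y^2\,\der_y$, so that the finite-dimensional analogue of $Q_{\al_i}^i$ is exactly the coefficient $-y^2$ of $\der_y$ in $\phi(f_i)$.

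Next I would lift this to the affine/vertex-algebra level. The WFF homomorphism $\ww$ is an affinization of $\phi$, and the polynomials $P_\be^i, Q_\be^i$ appearing in \eqref{eq:e}--\eqref{eq:f} are (after the substitution $y_\g \mapsto a^*_\g(z)$, $\der_\g \mapsto a_\g(z)$) literally the same polynomials as in the classical formulas \eqref{eq:phix} for $\phi$; this is the content of \cite[Theorem~6.1.6]{Fr2} and the surrounding discussion. Hence $Q_{\al_i}^i\big(a_\al^*(z)\big)$ is obtained from the polynomial $-y_{\al_i}^2$ by replacing $y_{\al_i}$ with $a_{\al_i}^*(z)$, which gives $-a_{\al_i}^*(z)^2$; the normal ordering $\no\,\cdot\,\no$ is the standard prescription for turning such a polynomial expression in the fields into a well-defined vertex operator, and a single field squared is normally ordered as $\no a_{\al_i}^*(z)^2\no$. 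This yields the claimed identity. Alternatively, and perhaps cleanly, one can verify it directly by imposing that $\ww$ be a vertex algebra homomorphism: applying $\ww$ to the defining relations of $\affg$ involving $e_i, h_i, f_i$ (in particular $[e_{i}, f_{i}]$ and the Serre-type relation $[f_i,[f_i,e_i]]$ within the $\mathfrak{sl}_2$), expanding the resulting OPEs among the free fields $a_{\al_i}, a_{\al_i}^*, b_i$, and matching the leading singular terms forces the coefficient of $a_{\al_i}(z)$ in the image of $f_i(z)$ to be $-\no a_{\al_i}^*(z)^2\no$.

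The main obstacle is bookkeeping rather than conceptual: one must make sure that, in the general-rank formula \eqref{eq:f}, no contribution from the other root vectors $a_\be, a_\be^*$ with $\be \neq \al_i$ leaks into the $\be = \al_i$ coefficient. This is handled by the weight grading --- the term $Q_{\al_i}^i\big(a_\al^*(z)\big)a_{\al_i}(z)$ is the unique term of the correct weight under the action of $h_i$ and of the principal gradation whose $a^*$-content lies only in the $\al_i$-direction --- so the restriction to the $\mathfrak{sl}_2$ attached to $\al_i$ is consistent and isolates precisely $Q_{\al_i}^i$. Since everything in sight (the polynomials $P^i_\be, Q^i_\be$, the structure constants, the $\mathfrak{sl}_2$ realization of $\phi$) is defined over $\Z$, the identity, once established over $\C$ as in \cite{Fr2}, descends to the field $\K$ of characteristic $p$ without change, so no characteristic-$p$ subtlety arises here.
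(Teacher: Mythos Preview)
The paper does not supply its own proof of this lemma; it simply states the result and refers the reader to \cite{Fr2} for the explicit determination. Your proposal is a correct reconstruction of the standard argument found there: the weight argument (for a simple root $\al_i$, the only monomial in the $y_\g$'s of weight $-2\al_i$ is $y_{\al_i}^2$, since $2\al_i$ is not a root and any decomposition $\g+\g'=2\al_i$ into positive roots forces $\g=\g'=\al_i$) isolates $Q_{\al_i}^i$ as a scalar multiple of $y_{\al_i}^2$, and restriction to the $\mathfrak{sl}_2$-copy fixes the scalar as $-1$; affinization via the WFF construction then gives $-\no a_{\al_i}^*(z)^2\no$. So your approach matches what the paper intends by its citation.
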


\begin{example}
For $\fing
=\mathfrak{sl}_2$, the formulas for $\ww$ are greatly simplified (where we drop the
indices of the Chevalley generators and of the free fields) as follows:
\begin{align}  \label{sl2}
\begin{split}
e(z) \mapsto & a(z),   \qquad h(z)  \mapsto -2 \no a^*(z) a(z)\no
+b(z),
 \\
 f(z)  \mapsto & - \no a^*(z)^2 a(z)\no +\kappa \partial_z a^*(z) +a^*(z) b(z).
\end{split}
\end{align}
\end{example}


The following is a main result of this paper, 
which will be proved in Section~\ref{sec:proof}.

\begin{theorem} \label{th:pcenter}
The homomorphism $\texttt{w}: \Vgc \rightarrow \Mg \otimes \pi^{\kappa-\kappa_c}$
sends the $p$-center $\zz_0 (\Vgc)$ to the $p$-center $\zz_0 (\Mg)
\otimes \zz_0 (\pi^{\kappa-\kappa_c})$.
\end{theorem}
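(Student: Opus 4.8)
The plan is to reduce the statement to a computation on the generating fields of $\Vgc$ together with the multiplicativity of $\ww$ and of the $p$-center construction. Concretely, $\zz_0(\Vgc)$ is spanned by elements $\iota(a_{-r_1}b_{-r_2}\cdots\vac)$ with $a,b,\ldots\in\fing$, and by Proposition~\ref{prop:commYi} together with the normally-ordered product formula, it suffices to check that $\ww$ maps each $\iota(x_{-r})\vac$ (equivalently, each Fourier coefficient $x_n^p-(x^{[p]})_{np}$) into $\zz_0(\Mg)\otimes\zz_0(\pi^{\kappa-\kappa_c})$; because $\ww$ is a vertex algebra homomorphism and the target $p$-center is a vertex subalgebra closed under normally-ordered products, the general case follows by applying $\ww$ to the state-field formula in Lemma~\ref{lem:state-field} and pushing $\iota$ through the product via Proposition~\ref{prop:commYi}. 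So the first step is this reduction, and the second step is: verify the claim for a Chevalley basis $x\in\{e_i,f_i,h_i\}$.

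For $x=h_i$ this is the easiest case: by \eqref{eq:h}, $\ww(h_i(z))=\sum_\be\beta(h_i)\no a_\be^*(z)a_\be(z)\no+b_i(z)$, a ``linear plus quadratic free-field'' expression whose $p$-th power (normally ordered) is computed exactly as in the proof of Lemma~\ref{lem:pVO}, using that Fourier modes of the same sign commute and Fermat's little theorem $b^p=b$ on $\mathbb F_p$; the cross terms that could spoil this are precisely the ones killed by the argument in Lemma~\ref{pFormula}. One reads off that $\ww(\iota(h_{i,-r})\vac)$ lands in $\K[a_{\al,n-1}^p,(a^*_{\al,n})^p]\otimes\K[b_{i,n}^p-b_{i,np}]$, i.e. in $\zz_0(\Mg)\otimes\zz_0(\pi^{\kappa-\kappa_c})$. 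For $x=e_i$ one uses \eqref{eq:e}: the ``classical part'' $a_{\al_i}(z)+\sum P_\be^i(a^*_\al(z))a_\be(z)$ is (an affinization of) the vector field $\phi(e_i)$, and Lemma~\ref{pFormula} tells us that $\phi(\iota(e_i))=\sum c_\be^p m_\be(y_\g^p)\der_\be^p$; affinizing this identity and taking into account that $\iota$ intertwines with the translation-derivative $\der$ (so that the modes are handled uniformly in $r$), one gets that $\ww(\iota(e_{i,-r})\vac)$ is a polynomial in $a_{\al,n}^p$ and $(a^*_{\al,n})^p$ alone. The case $x=f_i$ is the most delicate: by \eqref{eq:f} the image has three pieces, the ``classical'' $\sum Q_\be^i(a^*_\al(z))a_\be(z)$ handled as for $e_i$ via Lemma~\ref{pFormula} (with Lemma~\ref{lem:Qi} pinning down the $\beta=\al_i$ term as $-\no a_{\al_i}^*(z)^2\no$), the linear term $a_{\al_i}^*(z)b_i(z)$ which mixes the two tensor factors, and the anomaly term $(c_i+(\kappa-\kappa_c)\l e_i,f_i\r)\der_z a_{\al_i}^*(z)$; here one must check that after raising to the $p$-th power and normally ordering, all cross terms between the Weyl-algebra modes and the Heisenberg modes (which do commute, living in different tensor factors) reassemble into $p$-th powers, and that the derivative/anomaly term contributes only through $\der_z^{(p)}$ applied to $(a^*)^p$, using that $c_i$ is an integer so $c_i^p=c_i$ in $\K$.

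The main obstacle I expect is precisely the $f_i$ computation: one needs to control the $p$-th power of a sum of three mutually commuting (across tensor factors) but individually non-commuting field expressions, and to see that the ``order between $1$ and $p-1$'' cross terms cancel — this is the affine/vertex-algebra analogue of the cancellation in Lemma~\ref{pFormula}, but now the ambient algebra is $\mathcal A^\affg\otimes\B^\h_{\kappa-\kappa_c}$ rather than a Weyl algebra of finite type, so one has to be careful that the normal-ordering corrections (contractions between $a_{\al_i,n}$ and $a^*_{\al_i,m}$) are themselves scalars and hence cause no obstruction mod $p$. A clean way to organize this is: (i) prove a general lemma that if $\Phi(z)$ is a normally-ordered polynomial expression in free fields of the shape appearing in \eqref{eq:e}--\eqref{eq:f} such that the ``classical symbol'' of $\Phi$ equals $\phi(x)$ (affinized), then $\no\Phi(z)^p\no-\der^{(p-1)}\Phi^{[p]}(z)$ is a polynomial in the $p$-th powers of the modes; (ii) invoke Lemma~\ref{pFormula} to identify that classical symbol with $\sum c_\be^p m_\be(y_\g^p)\der_\be^p$; (iii) assemble the Fourier coefficients to conclude $\ww(\zz_0(\Vgc))\subset\zz_0(\Mg)\otimes\zz_0(\pi^{\kappa-\kappa_c})$. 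Steps (ii) and (iii) are essentially bookkeeping once (i) is in place, and (i) is where the real work sits; I would prove (i) by the same $A_+(z)+A_-(z)$ splitting and binomial argument as in Lemma~\ref{lem:pVO}, extended to several mutually commuting building blocks.
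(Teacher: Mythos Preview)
Your reduction to the Chevalley generators $e_i,h_i,f_i$ is the right first step and matches the paper. The gap is in your step~(i): the $A_+/A_-$ splitting from Lemma~\ref{lem:pVO} gives $\no\Phi(z)^p\no=\Phi_+(z)^p+\Phi_-(z)^p$ only when the modes of $\Phi$ of a fixed sign commute, and even then it only yields $\no\Phi(z)^p\no=\sum_n\Phi_n^p\,z^{-np-p}$. For $\Phi=h_i^{\ww}$ or $e_i^{\ww}$ this \emph{is} true (the modes form a Heisenberg-type family), but the resulting $\Phi_n^p$ is a complicated element of the Weyl algebra $\mathcal A^{\affg}$, and nothing you have written shows it lies in $\K[a_{\al,m}^p,(a^*_{\al,m})^p]$. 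Your proposed fix, ``extend to several mutually commuting building blocks,'' does not apply: the building blocks $a_\al(z)$ and $a^*_\al(z)$ are precisely the ones that fail to commute. Invoking Lemma~\ref{pFormula} is circular here, since that lemma's proof already \emph{uses} the fact that $\phi(\iota(x))$ lies in the $p$-center of $\DB(\bar U)$; the affine analogue of that fact is exactly what you are trying to establish.

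The paper supplies the missing mechanism via a centrality argument rather than a direct expansion. First (Lemma~\ref{lem:comm a*}) one shows that $(\iota(x_{-1})^{\ww}\vac)_{(m)}$ commutes with every $a^*_{\al,n}$: this uses only the finite-dimensional input $(D^x)^p f=D^{x^{[p]}}f$ from Lemma~\ref{lem:pcenters}, together with the shape \eqref{eq:twoterm} of $x_{-1}^{\ww}\vac$. Second (Lemma~\ref{lem:comm a}) one gets commutation with every $a_{\al,n}$ by a different trick: $(\iota(x_{-1})^{\ww}\vac)_{(m)}$ automatically commutes with all $e_{\gamma,n}^{\ww}$ since $\iota(x_{-1})$ is central in $U(\affg)$, and the triangular form $e_{\al,-1}^{\ww}=a_{\al,-1}+\sum_{\beta>\al}P_\beta^\al a_{\beta,-1}$ lets one invert to express each $a_{\al,-1}$ in terms of the $e_{\gamma,-1}^{\ww}$ and elements of $A=\K[a^*_{\al,0}]$. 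Once $(\iota(x_{-1})^{\ww}\vac)_{(m)}$ is known to commute with all $a_{\al,n}$ and $a^*_{\al,n}$, the $\Mg$-part is forced into $\zz_0(\Mg)$ by Proposition~\ref{pcenter=center}. The paper then writes, say, $(e_i(z)^p)^{\ww}=\sum_\beta P_\beta^i(a^*_\al(z))^p a_\beta(z)^p+\sum_{t=1}^{p-1}\no Y_t(a^*_\al(z))a_\beta(z)^t\no$ and uses commutation with $a_\beta^*(z)$ to kill the $Y_t$ by downward induction on $t$ --- this is the cancellation you wanted, but it requires the centrality lemmas as input rather than a direct binomial expansion. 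Finally, the $\pi^{\kappa-\kappa_c}$-part of $\iota(f_i(z))^{\ww}$ (the term $R(b_i(z))$ and the coefficient of $(\partial_z a^*_{\al_i}(z))^p$) is pinned down by a separate argument (Proposition~\ref{prop:exact}), not by the $A_+/A_-$ method.
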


We have the following immediate consequence.
\begin{corollary}  \label{babyW}
The homomorphism $\ww$ induces naturally a homomorphism of vertex
algebras $\ww_0: \Vgco \rightarrow \Mgo \otimes \pi^{\kappa-\kappa_c}_0$.
\end{corollary}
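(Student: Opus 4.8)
The plan is to deduce the corollary formally from Theorem~\ref{th:pcenter} by checking that $\ww$ descends to the $p$-restricted quotients. First I would unwind the two defining ideals. By Proposition~\ref{prop:resVA}(2), $\Vgco=\Vgc/I$ where $I=U(\affg)\cdot\zz_0(\Vgc)$; note that $I$ coincides with the vertex algebra ideal of $\Vgc$ generated by $\zz_0(\Vgc)$, since the Fourier modes $x_n=(x_{-1}\vac)_{(n)}$ generate $U(\affg)$. On the target side, by Proposition~\ref{pcenter=center}(2) together with the definition of $\pi^{\kappa-\kappa_c}_0$, the vertex algebra $\Mgo\otimes\pi^{\kappa-\kappa_c}_0$ is the quotient of $\Mg\otimes\pi^{\kappa-\kappa_c}$ by
\[
K:=\big(U(\mathcal A^{\affg})\cdot\zz_0(\Mg)\big)\otimes\pi^{\kappa-\kappa_c}\;+\;\Mg\otimes J,
\]
where $J$ is the ideal of $\pi^{\kappa-\kappa_c}$ generated by $\zz_0(\pi^{\kappa-\kappa_c})$; being the kernel of the tensor product of two quotient maps of vertex algebras, $K$ is a vertex algebra ideal of $\Mg\otimes\pi^{\kappa-\kappa_c}$.

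Next I would push $I$ forward under $\ww$. Since $\ww$ is a homomorphism of vertex algebras we have $\ww(a_{(n)}w)=\ww(a)_{(n)}\ww(w)$, so the image $\ww(I)$ lies in the vertex algebra ideal of $\Mg\otimes\pi^{\kappa-\kappa_c}$ generated by $\ww(\zz_0(\Vgc))$. By Theorem~\ref{th:pcenter}, $\ww(\zz_0(\Vgc))\subseteq\zz_0(\Mg)\otimes\zz_0(\pi^{\kappa-\kappa_c})$, and this subspace is contained in $K$ because $\zz_0(\Mg)\subseteq U(\mathcal A^{\affg})\cdot\zz_0(\Mg)$. As $K$ is an ideal, it contains the ideal generated by $\ww(\zz_0(\Vgc))$, and hence $\ww(I)\subseteq K$.

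Finally, the composition of $\ww$ with the quotient map $\Mg\otimes\pi^{\kappa-\kappa_c}\to\Mgo\otimes\pi^{\kappa-\kappa_c}_0$ annihilates $I$, so it factors uniquely through $\Vgco=\Vgc/I$, yielding the desired homomorphism $\ww_0$; it is a vertex algebra homomorphism because every map in the composition is. The corollary is thus a formality once Theorem~\ref{th:pcenter} is available; the only steps needing a little care are the identification of $K$ as a bona fide vertex algebra ideal of the tensor product, and the fact that the $p$-center $\zz_0(\pi^{\kappa-\kappa_c})$ appearing in Theorem~\ref{th:pcenter} agrees with the proper $p$-center $\zz_0'(\pi^{\kappa-\kappa_c})$ used to define $\pi^{\kappa-\kappa_c}_0$ (these coincide, since the Heisenberg vertex algebra $\pi^\kappa$ carries no ``${\bf 1}^p-{\bf 1}$'' state).
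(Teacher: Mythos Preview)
Your argument is correct and is exactly the standard ``immediate consequence'' the paper has in mind; the paper gives no proof beyond stating that the corollary follows at once from Theorem~\ref{th:pcenter}. Your only addition is to spell out carefully why $\ww(I)\subseteq K$ and to note the harmless ambiguity between $\zz_0(\pi^{\kappa-\kappa_c})$ and $\zz_0'(\pi^{\kappa-\kappa_c})$, both of which are appropriate clarifications rather than departures from the intended argument.
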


\subsection{The baby Wakimoto modules}

We define baby Wakimoto modules (associated to $p$-characters)
 at the critical level $\kappa_c$ as follows. 
 
Let $\xi$ be a $p$-character on the Lie algebra $\A^\affg$ such that 
 \begin{equation} \label{eq:pch}
 \xi((a_{\alpha, n}^*)^p) =0 = \xi(a_{\alpha, n-1}^p), 
 \qquad \text{ for  } n > 0, \alpha\in \bar{\Delta}_+.
 \end{equation}
 Let $\xi^\pi$ be a $p$-character on the Lie algebra $\B^\affh$.
Take a
weight $\la (t) =\sum_{n\in\Z} \la_n t^{-n-1} \in \finh^* ((t))$ which is
compatible with the $p$-character $\xi^\pi$ in the sense that
$$
\la_{i,n}^p -\la_{i,np}=\xi^\pi (b_{i,n})^p, \quad \text{ for all } 1\le i\le \ell,n \in \Z,
$$
where $\la_{i,n} = \la_n (h_i)$. Such a weight
gives rise to the one-dimensional $\pi^0$--module
$\K_{\la(t)}$ on which $b_{i,n}$ acts by multiplication by
$\la_{i,n}$. This defines a $\affg$-module at the
critical level on $\Mg$ (which is identified with $\Mg \otimes \K_{\la(t)}$).
Identifying $\Mg$ as the polynomial algebra $\K [a_{\al,n-1}, a^*_{\al,n}]_{\al
\in \bar \Delta_+, n\le 0}$, we let
 $I_{\xi}$ be the subalgebra of $\Mg$ spanned by
$(a_{\alpha, n}^*)^p - \xi((a_{\alpha, n}^*)^p), 
a_{\alpha, n-1}^p - \xi(a_{\alpha, n-1}^p)$, 
where $n\leq 0$ and $ \alpha\in \bar{\Delta}_+.$
Since all the generators of  $I_{\xi}$  are central in $U(\affg)$,
 $I_{\xi}$ is clearly a $\affg$-submodule of $\Mg$, and this gives rise to
 a quotient $\affg$-module 
 $$
 \mf w_{\xi} (\la) := \Mg/ I_{\xi}.
 $$
We refer to the $\affg$-module $\mf w_{\xi} (\la)$ as the {\em baby Wakimoto module} of high weight $\la$ (and $p$-character $\xi$).

Now assume $\kappa \neq \kappa_c$. 
Given $\la \in \affh^*$, let $\pi^{\kappa-\kappa_c}_0(\la)$
be the Fock representation of $\A^\affg$ generated by a vector $|\la\r$ such that
$$
b_{i,n} |\la\r =0 \quad (n>0), \qquad
b_{i,0} |\la\r =\la (h_i) |\la\r,
\qquad {\bf 1} |\la \r =|\la \r.
$$
Any module over the vertex algebra $\Mg \otimes \pi^{\kappa-\kappa_c}$ becomes
a module over $\Vg$ via the pullback of homomorphism $\ww: \Vg \rightarrow \Mg \otimes \pi^{\kappa-\kappa_c}$.
In particular, 
$$
W(\la) := \Mg \otimes \pi^{\kappa-\kappa_c}(\la)
$$
becomes a module over $\affg$ of level $\kappa$. 
This is the {\em Wakimoto module} of high weight $\la$ and level $\kappa$ defined in \cite{FF}. 

Let $\xi$ be a $p$-character on the Lie algebra $\A^\affg$ satisfying \eqref{eq:pch}.
 Let $\xi^\pi$ be a $p$-character on the Lie algebra $\B^\affh$
such that  
$$
\xi^\pi(b_{i, n}^p -b_{i,np})=0, \qquad \text{ for } n\geq 0. 
$$
Assume that $\la \in \affh^*$ is
compatible with the $p$-character $\xi^\pi$ in the sense that
$$
\la(h_{i})^p -\la (h_{i})=\xi^\pi (h_{i})^p, \quad \text{ for all }1\le i\le \ell.
$$
Identifying $\pi^{\kappa-\kappa_c}(\la)$ as the polynomial algebra $\K [b_{i,n}]_{n< 0, 1\le i\le \ell}$, we let
 $I_{\xi^\pi}$ be the subalgebra of $\pi^{\kappa-\kappa_c}(\la)$ spanned by
$b_{i,n}^p -b_{i,np}-\xi^\pi (b_{i,n})^p$,
where $n< 0$ and $1\le i \le \ell.$
Since all the generators of  $I_{\xi^\pi}$  are central in $U(\affg)$,
 $I_{\xi^\pi}$ is clearly a $\affg$-submodule of $\pi^{\kappa-\kappa_c}(\la)$, and this gives rise to
 a quotient $\affg$-module 
 $$
 \mf w^\kappa_{\xi,\xi^\pi} (\la) :=( \Mg \otimes \pi^{\kappa-\kappa_c}(\la)) / (I_{\xi} \otimes I_{\xi^\pi})
  \cong  (\Mg/ I_{\xi}) \otimes ( \pi^{\kappa-\kappa_c}(\la)/ I_{\xi^\pi}).
 $$
We refer to the $\affg$-module $\mf w^\kappa_{\xi, \xi^\pi} (\la)$ as the {\em baby Wakimoto module} of high weight $\la$ (and $p$-character $(\xi,\xi^\pi)$).
  
Assume for now that $\pi^{\kappa-\kappa_c}(\la)/ I_{\xi^\pi}$ is a module over the restricted vertex algebra
$\pi^{\kappa-\kappa_c}_0$. Since $\iota (b_{i,-1}) \vac=0 \in \pi^{\kappa-\kappa_c}_0$, we have
$$
0 =Y(\iota (b_{i,-1}) \vac, z) = \sum_{n\in \Z} (b_{i,n}^p- b_{i, np}) z^{-np-p}
$$ 
when acting on $\pi^{\kappa-\kappa_c}(\la)/ I_{\xi^\pi}$.  Hence $\xi^\pi =0$,
and $\la(h_i) \in \mathbb F_p$ for $1\le i\le \ell$. The converse is also true: if $\xi^\pi =0$
and $\la(h_i) \in \mathbb F_p$ for  $1\le i\le \ell$, then $\pi^{\kappa-\kappa_c}(\la)/ I_{\xi^\pi}$ is a module over the restricted vertex algebra
$\pi^{\kappa-\kappa_c}_0$. 
Similarly,  $\Mg/ I_{\xi}$ is a module over the restricted vertex algebra $\Mgo$
if and only if $\xi=0$. 
Summarizing, we have proved the following.

\begin{proposition} 
Let $\kappa \neq \kappa_c$. If $\la\in \affh^*$ satisfies $\la(h_i)\in \mathbb F_p$ for each $1\le i\le \ell$ and $\la(c)=\kappa$,
 then $\mf w^\kappa_{0,0} (\la)$
is a module over the vertex algebra $\Mgo \otimes \pi^{\kappa-\kappa_c}_0$, and hence a module over the vertex algebra $\Vgc$. 
\end{proposition}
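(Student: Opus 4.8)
The plan is to assemble the pieces accumulated in the paragraphs preceding the statement. Recall that $\mf w^\kappa_{0,0}(\la)\cong(\Mg/I_0)\otimes(\pi^{\kappa-\kappa_c}(\la)/I_0^{\pi})$, where $I_0\subset\Mg$ and $I_0^{\pi}\subset\pi^{\kappa-\kappa_c}(\la)$ denote the $\xi=0$ and $\xi^\pi=0$ specializations of the ideals $I_\xi$ and $I_{\xi^\pi}$ introduced above. Thus the proof reduces to two module-structure claims: (i) $\Mg/I_0$ is a module over $\Mgo$; and (ii) $\pi^{\kappa-\kappa_c}(\la)/I_0^{\pi}$ is a module over $\pi^{\kappa-\kappa_c}_0$. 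Granting these, the tensor product is a module over the vertex algebra $\Mgo\otimes\pi^{\kappa-\kappa_c}_0$, which is a quotient vertex algebra of $\Mg\otimes\pi^{\kappa-\kappa_c}$; hence it is a module over $\Mg\otimes\pi^{\kappa-\kappa_c}$, and so, via the WFF homomorphism $\ww$ (equivalently, via $\ww_0$ of Corollary~\ref{babyW}), a module over $\Vgc$, the hypothesis $\la(c)=\kappa$ matching the level $\kappa$ carried by $\Vgc$.

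For (i), I would first reduce to a vanishing statement. Since $\Mgo$ is the quotient of $\Mg$ by the vertex-algebra ideal $U(\A^\affg)\cdot\zz_0(\Mg)$, and since by Proposition~\ref{pcenter=center} together with the $\Mg$-analogue of Lemma~\ref{lem:pVO} every Fourier mode of a generating field $Y(v,z)$ with $v\in\zz_0(\Mg)$ is a polynomial in the operators $a_{\al,n}^p$ and $(a^*_{\al,n})^p$, $n\in\Z$, it suffices to check that all of these annihilate $\Mg/I_0$. On the polynomial realization $\Mg\cong\K[a_{\al,n-1},a^*_{\al,n}]_{n\le0}$ this is immediate: the ``negative'' modes act by multiplication by a coordinate, so their $p$-th powers lie in $I_0$ and vanish on the quotient; the ``positive'' modes act as a single partial derivative ($\der/\der a^*_{\al,-m}$ resp. $\der/\der a_{\al,-m}$), whose $p$-th power is already $0$ in characteristic $p$. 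This is precisely the ``if $\xi=0$'' half of the equivalence recorded just before the proposition.

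For (ii), the same reduction — now via the analogue for $\pi^{\kappa-\kappa_c}$ of Proposition~\ref{prop:commYi} — shows it suffices that all $b_{i,m}^p-b_{i,mp}$, $m\in\Z$, annihilate $\pi^{\kappa-\kappa_c}(\la)/I_0^{\pi}$. For $m<0$ both $b_{i,m}$ and $b_{i,mp}$ are multiplication operators, so $b_{i,m}^p-b_{i,mp}\in I_0^{\pi}$; for $m>0$ one has $b_{i,m}^p=0$ since $b_{i,m}$ is a constant-coefficient combination of the pairwise commuting derivations $\der/\der b_{j,-m}$, while $b_{i,mp}$ is central in $\B^\affh_{\kappa-\kappa_c}$ (its bracket coefficient $mp$ is divisible by $p$) and annihilates the cyclic vector $|\la\r$, hence acts by $0$; and for $m=0$, the operator $b_{i,0}^p-b_{i,0}$ acts by the scalar $\la(h_i)^p-\la(h_i)$, which vanishes exactly because $\la(h_i)\in\mathbb F_p$. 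This yields (ii) and completes the argument.

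The argument is largely bookkeeping. The points that are not purely formal are the two characteristic-$p$ inputs — that $\der^p=0$ automatically disposes of the ``positive-mode'' contributions, and that the modes $b_{i,mp}$ with $mp>0$ become central and hence act by $0$ on the Fock module — together with keeping in mind that, by Proposition~\ref{pcenter=center}, the $p$-center of $\A^\affg$ involves no shift of indices whereas that of $\B^\affh_{\kappa-\kappa_c}$ does. I do not expect any substantial obstacle beyond this.
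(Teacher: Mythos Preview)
Your argument is correct and follows essentially the same line as the paper: the proposition is stated as a summary of the discussion immediately preceding it, where the paper reduces to the vanishing of the modes $b_{i,n}^p-b_{i,np}$ (and likewise for $\Mg$) and records the equivalences you verify case by case. You have simply spelled out the ``converse'' directions that the paper asserts without detail, using the same reductions via Proposition~\ref{prop:commYi} and the free-field analogue of Lemma~\ref{lem:pVO}.
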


\section{Proof of Theorem~\ref{th:pcenter}}
\label{sec:proof}

\subsection{Restriction of $\ww$ to the $p$-center}

Theorem~\ref{th:pcenter} follows readily from 
the explicit description of the restriction map of $\texttt{w}$
to $\zz_0(\Vgc)$ given in the following theorem. Recall that
\begin{equation}  \label{eq:iotaehf}
\iota(e_i  (z)) =e_i(z)^p, \qquad 
\iota(h_i  (z)) =h_i(z)^p -h_i(z), \qquad
\iota(f_i  (z)) =f_i(z)^p. 
\end{equation}

\begin{theorem}   \label{th:ehfww}
The restriction of $\texttt{w}$
to $\zz_0(\Vgc)$  is given in terms of fields as follows:
\begin{align}
\iota(e_i  (z))^\ww = & a_{\al_i}(z)^p +\sum_{\al_i \neq \be \in \bar{\Delta}_+} P_\be^i \big(a_\al^*(z)\big)^p a_\be(z)^p, 
 \label{eq:ep}
  \\
\iota(h_i  (z))^\ww = & - \sum_{ \be \in \bar{\Delta}_+}  \beta(h_i) a_\be^*(z)^p a_\be(z)^p + b_i(z)^p
-\der^{(p-1)} b_i(z), 
 \label{eq:hp}
  \\
\iota(f_i (z))^\ww = & \sum_{\be \in \bar{\Delta}_+} Q_\be^i \big(a_\al^*(z)\big)^p a_\be(z)^p
 + 
 (\kappa^p-\kappa)\l e_i,f_i \r   \big( \partial_z a_{\al_i}^*(z) \big)^p
 \notag  \\
  &\qquad \qquad +  a_{\al_i}^*(z)^p \big(b_i(z)^p -\der^{(p-1)} b_i(z) \big).
  \label{eq:fp}
\end{align}  
\end{theorem}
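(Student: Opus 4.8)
\emph{Step 1: reduction to normally ordered $p$-th powers of the currents.} First I would use that $\ww$ is a homomorphism of vertex algebras, hence intertwines the translation operator and all products $a_{(n)}b$. In the restricted Lie algebra $\fing$ one has $e_i^{[p]}=f_i^{[p]}=0$ and $h_i^{[p]}=h_i$ (valid for $p$ large, assumed throughout), so that $\iota(e_{i,-1})\vac=e_{i,-1}^{\,p}\vac$, $\iota(f_{i,-1})\vac=f_{i,-1}^{\,p}\vac$ and $\iota(h_{i,-1})\vac=h_{i,-1}^{\,p}\vac-h_{i,-p}\vac$, where each first-power vector is an iterated $(-1)$-product of $e_{i,-1}\vac$ (resp.\ $f_{i,-1}\vac$, $h_{i,-1}\vac$) and $h_{i,-p}\vac=T^{(p-1)}h_{i,-1}\vac$. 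Applying $\ww$ and reading off fields by \eqref{statefield} and \eqref{Ypower}, the theorem reduces to showing that $\no\ww(e_i(z))^p\no$, $\no\ww(f_i(z))^p\no$ and $\no\ww(h_i(z))^p\no-\der^{(p-1)}\ww(h_i(z))$ equal the right-hand sides of \eqref{eq:ep}, \eqref{eq:fp} and \eqref{eq:hp} after substituting \eqref{eq:e}--\eqref{eq:f}; here $\no(\cdot)^p\no$ is the (well-defined) $p$-fold normally ordered self-product of a field.

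\emph{Step 2: a Frobenius rule for the free fields.} The technical core is a ``freshman's dream with corrections'' for $\Mg$, $\pi^{\kappa-\kappa_c}$ and their tensor product: for a field $A(z)=\sum_j c_j\no X_{j,1}(z)\cdots X_{j,k_j}(z)\no$ written as a $\K$-combination of normally ordered monomials in the generating fields $a_\al(z),a^*_\al(z),b_i(z)$ and their divided-power derivatives, I would prove that $\no A(z)^p\no=\sum_j c_j^{\,p}\no X_{j,1}(z)^{(p)}\cdots X_{j,k_j}(z)^{(p)}\no$ plus a correction which is a sum of terms each carrying at least one Wick contraction between two of the factors; here $X(z)^{(p)}$ denotes the normally ordered $p$-th power $\no X(z)^p\no$ of a generating field, as computed in Lemma~\ref{lem:pVO}. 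The proof copies that of Lemma~\ref{lem:pVO}: split each generating field into creation and annihilation parts, expand by the multinomial and Wick theorems, and use that within each part the modes commute so that the $p$-th power collapses ($b^p=b$ in $\mathbb F_p$). One single quadratic identity must be isolated here, the $\beta\gamma$-analogue of the characteristic-$p$ Weyl-algebra identity $(y\der_y)^p-y\der_y=y^p\der_y^p$, namely $\no(\no a^*_\be(z)a_\be(z)\no)^p\no=-a^*_\be(z)^p a_\be(z)^p+\der^{(p-1)}\no a^*_\be(z)a_\be(z)\no$ (the sign being forced by the normal-ordering prescription, equivalently by the self-level $-1$ of the current $\no a^*_\be a_\be\no$).

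\emph{Step 3: assembling the three formulas.} For $e_i$ the summands $a_{\al_i}(z)$ and $\no P^i_\be(a^*)a_\be(z)\no$ ($\al_i\ne\be\in\bar{\Delta}_+$) in \eqref{eq:e} pairwise have regular operator products (in particular $P^i_\be$ involves no $a^*_\be$), so all corrections of Step~2 vanish and \eqref{eq:ep} follows --- this is also the $\ww$-affinization of Lemma~\ref{pFormula} for $x=e_i$. For $h_i$ the currents $\no a^*_\be(z)a_\be(z)\no$ pairwise commute and commute with $b_i(z)$, so Step~2 produces no cross corrections; inserting the quadratic identity, using $\be(h_i)^p\equiv\be(h_i)$ in $\mathbb F_p$, and subtracting $\der^{(p-1)}\ww(h_i(z))$ --- which absorbs the $\der^{(p-1)}$-terms of the quadratic identity and leaves $-\der^{(p-1)}b_i(z)$ --- gives \eqref{eq:hp}. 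The hard case is $f_i$: the three types of summands in \eqref{eq:f} do \emph{not} mutually commute, because the $\be=\al_i$ summand equals $-\no a^*_{\al_i}(z)^2 a_{\al_i}(z)\no$ by Lemma~\ref{lem:Qi} and its $a_{\al_i}(z)$ contracts with the $a^*_{\al_i}(z)$ in the derivative term $\la_i\,\der_z a^*_{\al_i}(z)$ (with $\la_i:=c_i+(\kappa-\kappa_c)\l e_i,f_i\r$) and in $\no a^*_{\al_i}(z)b_i(z)\no$, while $b_i(z)$ contracts with itself. Expanding $\no\ww(f_i(z))^p\no$ by Step~2, the ``main'' terms give $\sum_\be Q^i_\be(a^*)^p a_\be(z)^p$, $\la_i^{\,p}(\der_z a^*_{\al_i}(z))^p$ and $a^*_{\al_i}(z)^p b_i(z)^p$, and one must check that the accumulated corrections replace $\la_i^{\,p}$ by $\la_i^{\,p}-\la_i$ --- which, by $c_i,\l e_i,f_i\r\in\Z$ and $\kappa_c=-h^\vee\in\Z$ (so $\kappa_c^p=\kappa_c$, hence $(\kappa-\kappa_c)^p-(\kappa-\kappa_c)=\kappa^p-\kappa$), equals $(\kappa^p-\kappa)\l e_i,f_i\r$ --- and supply the extra piece $-a^*_{\al_i}(z)^p\,\der^{(p-1)}b_i(z)$, yielding \eqref{eq:fp}. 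The main obstacle will be exactly this $f_i$ bookkeeping: expanding the $p$-th power of a sum of three mutually non-commuting free-field composites and verifying that the many contraction terms telescope to the stated closed form, using \eqref{eq:f}, Lemma~\ref{lem:Qi} and the Euler identity of Step~2 simultaneously.
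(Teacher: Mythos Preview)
Your Step~1 is the same reduction the paper makes. Where you diverge is in the mechanism for killing the Wick corrections. The paper's device is a pair of centrality lemmas (Lemmas~\ref{lem:comm a*} and~\ref{lem:comm a}): since $\iota(x_{-1})$ is central in $U(\affg)$ and $\ww$ is a $\affg$-homomorphism, $(\iota(x_{-1})^\ww\vac)_{(m)}$ commutes with every $e_{\gamma,n}^\ww$; unwinding the triangular system \eqref{eq:linear-independent} and invoking the finite-level Lemma~\ref{lem:pcenters} then shows it commutes with every $a^*_{\al,n}$ and $a_{\al,n}$. Writing $(e_i(z)^p)^\ww$ as the expected leading term plus $\sum_{t=1}^{p-1}\no Y_t(a^*_\al)\,a_\be(z)^t\no$, commutation with $a^*_\be(z)$ forces each $Y_t=0$ by downward induction on~$t$, giving \eqref{eq:ep} and \eqref{eq:hp} with no contraction bookkeeping at all. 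Your substitute for $e_i$---that the summands of \eqref{eq:e} ``pairwise have regular operator products''---is not established and need not hold outside the simply-laced case (one would need $\be_2\ge 2\al_i$ to be impossible for every root $\be_2$, which fails when $\al_i$ is short). Your $\beta\gamma$ Euler identity in Step~2 is likewise asserted, not proved.

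The genuine gap is \eqref{eq:fp}. You correctly flag the $f_i$ contraction bookkeeping as ``the main obstacle'' and then do not carry it out. The paper does not carry it out either. After the centrality lemmas reduce $\iota(f_i(z))^\ww$ to the shape
\[
\sum_{\be} Q^i_\be(a^*)^p a_\be(z)^p+\eta\,(\partial_za^*_{\al_i}(z))^p+a^*_{\al_i}(z)^p\,R(b_i(z))
\]
with unknown $\eta\in\K$ and unknown differential polynomial $R$, these are pinned down by a separate argument (Proposition~\ref{prop:exact}): one realizes Wakimoto modules over~$\Z$ as limits of twisted Verma modules, computes $\big((e^p_{-1}\vac)_{(p-1)}f^p_{-1}\vac\big)^\ww$ in two ways, and reads off $R$ from the comparison modulo~$p$, while $\eta$ is fixed by reducing to $\mf{sl}_2$ via Lemma~\ref{lem:Qi}. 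Your proposal has no analogue of this step; the assertion that the accumulated corrections ``replace $\la_i^{\,p}$ by $\la_i^{\,p}-\la_i$'' is precisely the content of Proposition~\ref{prop:exact} and does not fall out of the Wick expansion you sketch.
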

Actually $P_\be^i \big(a_\al^*(z)\big)^p$ and $Q_\be^i \big(a_\al^*(z)\big)^p$ 
are simply  polynomials in the commuting vertex operators $a_\al^*(z)^p$, for $\al \in \bar{\Delta}_+$. 
Note that the normal orderings are no longer needed in the above formulas.

\begin{example}
The formulas above read in the case of
$\widehat{\mathfrak{sl}}_2$ as follows:
\begin{align*}
\iota ( e(z)  )^\ww &= a(z)^p,   
 \\
 \iota ( h(z)  )^\ww &=
-2  a^*(z)^p a(z)^p +b(z)^p-\der^{(p-1)} b(z),
 \\
\iota ( f(z)  )^\ww  & = -  a^*(z)^{2p} a(z)^p +(\kappa^p-\kappa)  \partial_z a^*(z) 
+   a^*(z)^p \left (b(z)^p-\der^{(p-1)} b(z) \right).
\end{align*}
 \end{example}
 
The remainder of this section is devoted to the proof of Theorem~\ref{th:ehfww}.
 
\subsection{Proof of Theorem~\ref{th:ehfww}}

Let 
$$
A=\K[a_{\alpha,0}^*]_{\alpha\in \bar{\Delta}_+},
$$
which can be
identified with $\K[\bar N_+]=\K[\bar U]$.  The space $V :=\sum_{\alpha\in
\bar{\Delta}_+}A a_{\alpha,-1}$ is identified with
$\mathcal{T}_{\mathcal B}(\bar U)$, where $\mathcal{T}_{\mathcal
B}$ is the tangent sheaf of $\mathcal B$. Set $\Omega := TA$, where
$T$ is the translation operator in the vertex algebra $\Mg$. Note
that $(\Mgo)_0=A$ and $(\Mgo)_1=V \+ \Omega$.
An element $D\in V$ acts on $A$ as a derivation by the
correspondence $D\mapsto D_{(0)}$, where we denote $a_{(n)}
=Y(a,z)_{(n)} =\Res_{z}z^n Y(a,z)$. One has
\begin{align}  \label{commutator}
[D_{(m)}, f_{(n)}]=(D f)_{(m+n)}\quad \text{for $D\in V$ and $f\in
A$}.
\end{align}
Recall $\phi:  U(\fing)  \rightarrow \DB(\bar U)$, $x \mapsto D^x$.
The algebra $\DB(\bar U)$ can be identified with the $A$-algebra 
generated by $V$ such that $D\cdot f =f\cdot D +D_{(0)}f$, and thus we have
$D^x \in V$ for each $x\in \fing$. It follows by \eqref{eq:e}-\eqref{eq:f} that the
image of $x_{-1} \vac$ under the vertex algebra homomorphism $\ww:
\Vgc \rightarrow \Mg\otimes \pi^{\kappa-\kappa_c}$, denoted by $x_{-1}^\ww \vac$, is of the
form
\begin{align}  \label{eq:twoterm}
 x_{-1}^\ww \vac   =D^x_{(-1)} + f^x_{(-2)} +b^x_{(-1)}
\end{align}
for $x\in \fing$, some $f^x \in A$ and some conformal weight one vector $b^x \in \pi^{\kappa-\kappa_c}$ depending on $x$.

\begin{lemma} \label{lem:comm a*}
Let $x\in \fing$ and
$m\in\Z$. Then $\big(\iota(x_{-1})^\ww \vac \big)_{(m)}$  commutes with $a_{\alpha,n}^*$ for all $\al \in \bar
\Delta_+$ and $n\in \Z$.
\end{lemma}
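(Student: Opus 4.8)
The plan is to reduce the statement to the vanishing of $a^*_{\al,l}\cdot\big(\iota(x_{-1})^\ww\vac\big)$ for every $l\ge 1$, and then to extract this from the characteristic-$p$ cancellations already exploited in Lemmas~\ref{lem:pVO} and \ref{pFormula}.

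\emph{Reduction.} On $\Mg\otimes\pi^{\kappa-\kappa_c}$ one has $a^*_{\al,n}=(a^*_{\al,0}\vac)_{(n-1)}$, so the commutator formula for vertex operators gives
$$\big[(\iota(x_{-1})^\ww\vac)_{(m)},\,a^*_{\al,n}\big]=\sum_{j\ge0}\binom{m}{j}\big((\iota(x_{-1})^\ww\vac)_{(j)}(a^*_{\al,0}\vac)\big)_{(m+n-1-j)}.$$
It therefore suffices to prove $(\iota(x_{-1})^\ww\vac)_{(j)}(a^*_{\al,0}\vac)=0$ for all $j\ge0$, which by skew-symmetry is the same as $a^*_{\al,l}\cdot\big(\iota(x_{-1})^\ww\vac\big)=0$ for all $l\ge1$. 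Now $a^*_{\al,l}$ with $l\ge1$ annihilates $\vac$, commutes with every $a^*_{\be,m}$ and every $b_{i,m}$, and satisfies $[a^*_{\al,l},a_{\be,m}]=-\delta_{\al\be}\delta_{l,-m}$; hence on monomials in the generators of $\Mg\otimes\pi^{\kappa-\kappa_c}$ it acts as the derivation $-\frac{\partial}{\partial a_{\al,-l}}$. Thus it is enough to show that $\iota(x_{-1})^\ww\vac$ involves the coordinates $a_{\be,m}$ only through their $p$-th powers $a_{\be,m}^p$.

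\emph{The crux.} Write $\iota(x_{-1})\vac=x_{-1}^p\vac-(x^{[p]})_{-p}\vac$ and apply $\ww$, which commutes with $T$ (hence with $T^{(p-1)}$) and carries iterated $(-1)$-products to iterated $(-1)$-products. By \eqref{eq:twoterm}, $x(z)^\ww=Y(D^x,z)+\der_z Y(f^x,z)+Y(b^x,z)$, in which $Y(D^x,z)$ is linear in the fields $a_\be(z)$ while the other two summands are free of these fields. Consequently
$$Y\big(\iota(x_{-1})^\ww\vac,z\big)=\no (x(z)^\ww)^p \no-\der^{(p-1)}\big((x^{[p]}(z))^\ww\big),$$
with $\no (x(z)^\ww)^p \no$ the normally ordered $p$-th power. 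We claim this field involves the $a_\be$-fields only through $a_\be(z)^p$. This is exactly the mechanism of the proof of Lemma~\ref{lem:pVO}, where $\no A(z)^p \no=A_+(z)^p+A_-(z)^p$ and all the mixed terms die in characteristic $p$, and of Lemma~\ref{pFormula}, where the commutators producing operators of intermediate order cancel: expanding the $p$-th normally ordered power of $Y(D^x,z)+\big(\der_z Y(f^x,z)+Y(b^x,z)\big)$ and sorting the result by the number of $a_\be$-factors, the terms of intermediate degree vanish, and the surviving degree-one remainder equals, by the restricted-Lie-algebra identity $\phi(x^{[p]})=\phi(x)^{[p]}$ of Lemma~\ref{lem:pcenters} (the affinization of Lemma~\ref{pFormula}), precisely the operator $\der^{(p-1)}\big((x^{[p]}(z))^\ww\big)$ being subtracted. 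Passing back to the state, $\iota(x_{-1})^\ww\vac$ then involves the coordinates $a_{\be,m}$ only through $a_{\be,m}^p$, and the reduction completes the proof.

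\emph{Main obstacle.} The technical heart is the last step. Since the $(-1)$-product is neither commutative nor associative, one must track carefully which interleavings survive in the $p$-fold normally ordered product, and then verify that after the characteristic-$p$ cancellations only the $p$-th-power terms and the term subtracted via $x^{[p]}$ remain; this requires redoing, in the present setting, the bookkeeping of the proofs of Lemmas~\ref{lem:pVO} and \ref{pFormula}. The reduction itself is purely formal.
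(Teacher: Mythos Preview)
Your reduction step is fine: showing $a^*_{\al,l}\cdot\big(\iota(x_{-1})^\ww\vac\big)=0$ for $l\ge1$ is indeed equivalent to the lemma, and identifying $a^*_{\al,l}$ with $-\partial/\partial a_{\al,-l}$ is correct. The gap is in the ``crux.'' You assert that in the expansion of the $p$-th normally ordered power of $Y(D^x,z)+\der_zY(f^x,z)+Y(b^x,z)$, the terms of degree $2,\dots,p-1$ in the $a_\be$-fields vanish and the degree-$1$ remainder cancels against $\der^{(p-1)}(x^{[p]}(z))^\ww$. Neither claim is justified by the analogies you cite. Lemma~\ref{lem:pVO} treats a single field whose positive and negative modes separately commute; here $Y(D^x,z)$ is a sum $\sum_\be g_\be(a^*_\gamma(z))a_\be(z)$ with noncommuting ingredients, and the $(-1)$-product has no binomial expansion. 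Lemma~\ref{pFormula} deduces the vanishing of intermediate-order terms from the \emph{a priori} knowledge that $\phi(\iota(x))$ lies in the $p$-center $\K[y_\gamma^p,\partial_\gamma^p]$ (Lemma~\ref{lem:pcenters}); the corresponding statement for $\ww$ is precisely Theorem~\ref{th:pcenter}, whose proof \emph{uses} the present lemma. Indeed, in Proposition~\ref{prop:eiota} the paper writes $(\no a^*_\be(z)a_\be(z)\no)^p-\no a^*_\be(z)a_\be(z)\no=a^*_\be(z)^pa_\be(z)^p+\sum_{t=1}^{p-1}\no X_t\,a_\be(z)^t\no$ and invokes Lemma~\ref{lem:comm a*} to force $X_t=0$: this is exactly the vanishing you are assuming. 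Your own ``Main obstacle'' paragraph concedes that the bookkeeping has not been carried out.

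The paper avoids the expansion altogether. It computes the commutator directly: for $f\in A$ one has $[x^\ww_m,f_{(n)}]=[D^x_{(m)},f_{(n)}]=(D^xf)_{(m+n)}$ by \eqref{commutator}, since the $f^x$- and $b^x$-pieces commute with $A$. Iterating, $[(x^\ww_m)^p,f_{(n)}]=(\ad x^\ww_m)^p(f_{(n)})=((D^x)^pf)_{(mp+n)}$, and now the finite-dimensional identity $(D^x)^p=D^{x^{[p]}}$ of Lemma~\ref{lem:pcenters} gives $((D^x)^pf)_{(mp+n)}=(D^{x^{[p]}}f)_{(mp+n)}=[(x^{[p]})^\ww_{mp},f_{(n)}]$. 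Taking $f=a^*_{\al,0}$ finishes. This is the right way to transport Lemma~\ref{lem:pcenters} to the affine level: through $(\ad)^p$ on modes, not through an expansion of the state.
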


\begin{proof}
Since the algebra homomorphism $\phi: U(\fing) \rightarrow\DB(U)$
is compatible with the restricted Lie algebra structures (see Lemma~\ref{lem:pcenters}), we have, for $f\in A$, that
\begin{align}  \label{commutator2}
&[D^x,D^y]f=D^{[x,y]}f,\quad (D^x)^p f=D^{x^{[p]}}f.
\end{align}
%
Recall from Proposition~\ref{prop:commYi}  that
\begin{align*}
Y( \iota(x_{-1})^\ww\vac,z)=\sum_{m\in \Z} \left ( (x^\ww_{m})^p -
(x^{[p]})_{mp}^\ww \right ) z^{-mp-p}.
\end{align*}
By \eqref{commutator}, \eqref{eq:twoterm} and \eqref{commutator2},
one has, for $f\in A$, 
\begin{align*}
 [(x_{m}^\ww )^p, f_{(n)}] &= (\ad x_{m}^\ww )^p (f_{(n)})
  =(\ad
D^x_{(m)})^p (f_{(n)}) \\
&=((D^x)^p f)_{(n+mp)} 
 =(D^{x^{[p]}}f)_{(n+ mp)}
\\ &=[D^{x^{[p]}}_{(mp)}, f_{(n)}] 
 =[(x^{[p]})_{mp}^\ww, f_{(n)}].
\end{align*}
Now the lemma follows by taking $f =a^*_{\al,0}$.
\end{proof}

\begin{lemma}  \label{lem:comm a}
Let $x\in \fing$ and 
$m\in\Z$. Then  $(\iota(x_{-1})^\ww \vac)_{(m)}$ commutes with  $a_{\alpha,n}$ for all $\al \in \bar
\Delta_+$ and $n\in \Z$.
\end{lemma}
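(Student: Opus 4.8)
The strategy parallels the proof of Lemma~\ref{lem:comm a*}, replacing the role played by the functions $a^*_{\al,0}\in A$ by the vector fields $D = a_{\al,-1}$, and exploiting the compatibility of $\ww$ with $\iota$ coming from Proposition~\ref{prop:commYi}. First I would record that, by Proposition~\ref{prop:commYi} applied inside the target vertex algebra $\Mg\otimes\pi^{\kappa-\kappa_c}$,
\begin{align*}
Y\big(\iota(x_{-1})^\ww\vac, z\big)=\sum_{m\in\Z}\big((x^\ww_m)^p-(x^{[p]})^\ww_{mp}\big)z^{-mp-p},
\end{align*}
so that $(\iota(x_{-1})^\ww\vac)_{(m)}$ is, up to relabelling, the coefficient operator $(x^\ww_{m'})^p-(x^{[p]})^\ww_{m'p}$ for the appropriate $m'$. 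Hence it suffices to show that for every $x\in\fing$ and $m\in\Z$, the operator $(x^\ww_m)^p-(x^{[p]})^\ww_{mp}$ commutes with all $a_{\al,n}$. By \eqref{eq:twoterm}, $x^\ww_{-1}\vac=D^x_{(-1)}+f^x_{(-2)}+b^x_{(-1)}$; the pieces $f^x_{(-2)}$ and $b^x_{(-1)}$ contribute operators built from $a^*_{\al,0}$ (resp.\ the $b_{i,n}$) only, so in the commutator with $a_{\al,n}$ only the derivation part $D^x$ is relevant in a precise sense, which I would make rigorous exactly as in Lemma~\ref{lem:comm a*} using the bracket formula \eqref{commutator}.

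The key computation would be the analogue of the displayed string in Lemma~\ref{lem:comm a*}: for $D\in V=\bigoplus_\al A\,a_{\al,-1}$ one has $[D_{(m)}, a_{\al,n}]=-(D\text{-action on }a_\al)_{(m+n)}$, but more usefully one checks that the assignment $D\mapsto D_{(m)}$ together with the operators $a_{\al,n}$ generates a copy of (a completion of) $\DB(\bar U)$ acting on $\Mg$ in which $a_{\al,n}$ plays the role of a function-type generator dual to $D$; the precise statement I need is that
\begin{align*}
(\ad D^x_{(m)})^p(a_{\al,n}) = \big((D^x)^p\text{-action}\big)_{(\,\cdot\,)} = (D^{x^{[p]}}\text{-action})_{(\,\cdot\,)} = (\ad (x^{[p]})^\ww_{mp})(a_{\al,n}),
\end{align*}
using \eqref{commutator2} — i.e.\ $(D^x)^p=D^{x^{[p]}}$ in $\DB(\bar U)$ from Lemma~\ref{lem:pcenters} — together with the fact that the ``lower order'' pieces $f^x_{(-2)},\,b^x_{(-1)}$ and their Fourier modes commute with $a_{\al,n}$ up to terms that cancel in the $p$th power exactly as in the previous lemma. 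The content is that $(\ad x^\ww_m)^p$ acting on $a_{\al,n}$ equals $(\ad D^x_{(m)})^p$ acting on $a_{\al,n}$, because the extra summands in \eqref{eq:twoterm} either commute with $a_{\al,n}$ outright or produce, upon taking the $p$th power of the adjoint, only the Frobenius of a derivation which is again accounted for by $D^{x^{[p]}}$.

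\textbf{Main obstacle.} The delicate point is the bookkeeping for the $f^x_{(-2)}$ and $b^x_{(-1)}$ terms. In Lemma~\ref{lem:comm a*} these were harmless because $a^*_{\al,0}$ commutes with everything in $A$ and with the $b_{i,n}$; here, $a_{\al,n}$ does \emph{not} commute with $f^x_{(-2)}$ (since $f^x\in A=\K[a^*_{\al,0}]$ and $[a_{\al,n},a^*_{\al,-n}]\neq 0$), so I cannot simply discard it. The way around this is to observe that $\ad a_{\al,n}$ applied to any operator coming from $A$-valued or $b$-valued Fourier modes lands again in such operators, so when one forms $(\ad x^\ww_m)^p$ and expands, every mixed term is a Fourier mode of an operator of ``$A$-type'' of strictly lower filtration degree, and — exactly as in the proof of Lemma~\ref{pFormula} — these lower-order contributions telescope and cancel modulo $p$, leaving only the purely-derivation $p$th power $D^{x^{[p]}}$, which by \eqref{commutator2} already matches $(x^{[p]})^\ww_{mp}$ on $a_{\al,n}$. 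I expect the cleanest write-up mimics Lemma~\ref{lem:comm a*} line for line, with ``$f=a^*_{\al,0}$'' replaced by ``$a_{\al,n}$'' and one extra sentence invoking the $p$-nilpotence of $\ad a_{\al,n}$ on the relevant operators to kill the cross terms; so the proof should be short, and I would phrase it that way rather than re-deriving the filtration argument in full.
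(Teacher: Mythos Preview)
Your sketch has a genuine gap at exactly the point you flag as the ``main obstacle,'' and the fix you propose does not close it. In Lemma~\ref{lem:comm a*} the argument works because $[D^x_{(m)}, f_{(n)}]=(D^x f)_{(m+n)}$ keeps you inside Fourier modes of $A$, and the correction terms $f^x_{(-2)},\,b^x_{(-1)}$ genuinely commute with $f_{(n)}$, so $(\ad x_m^\ww)^p f_{(n)}=(\ad D^x_{(m)})^p f_{(n)}$ on the nose. None of this survives when $f_{(n)}$ is replaced by $a_{\al,n}$: the commutator $[D^x_{(m)}, a_{\al,n}]$ lands in Fourier modes of $V$, not of $A$; the mode $(f^x)_{(m-1)}$ does \emph{not} commute with $a_{\al,n}$ (it contributes scalars which, when fed back into $\ad D^x_{(m)}$, do vanish, but $\ad(f^x)_{(m-1)}$ applied to elements of $V$ produces modes of $A$, which then interact nontrivially with further $\ad D^x_{(m)}$'s); and for $x=f_i$ the piece $a^*_{\al_i}(z)b_i(z)$ contributes $b_{i,m+n}$ to $[x_m^\ww,a_{\al_i,n}]$, which does not disappear. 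So the claim that ``the extra summands either commute with $a_{\al,n}$ outright or produce only the Frobenius of a derivation'' is false as stated, and the appeal to a Lemma~\ref{pFormula}-style cancellation is not justified: there the cancellation was forced because the output was known a priori to lie in the $p$-center $\K[y_\gamma^p,\partial_\gamma^p]$, whereas here you have no such a priori constraint to invoke. At best you would be embarking on a delicate multi-term bookkeeping exercise in characteristic $p$, not ``one extra sentence.''

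The paper's argument is different and avoids all of this. It exploits the upper-triangular shape of $\ww$ on $\finn$: from
\[
e_{\al,-1}^\ww=a_{\al,-1}+\sum_{\beta>\al}P_\beta^\al\,a_{\beta,-1},\qquad P_\beta^\al\in A,
\]
one inverts to write $a_{\al,-1}=e_{\al,-1}^\ww+\sum_{\beta>\al}Q_\beta^\al\,e_{\beta,-1}^\ww$ with $Q_\beta^\al\in A$. Applying $Y(-,z)_{(n)}$, each $a_{\al,n}$ is a finite combination of products of modes of $e_{\gamma}^\ww$ and modes of elements of $A$. Now $(\iota(x_{-1})^\ww\vac)_{(m)}$ commutes with the former because $\iota(x_{-1})$ is central in $U(\affg)$ and $\ww$ is a homomorphism, and it commutes with the latter by Lemma~\ref{lem:comm a*}. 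That is the whole proof: rather than computing $(\ad x_m^\ww)^p$ on $a_{\al,n}$ directly, express $a_{\al,n}$ in terms of operators you already know commute.
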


\begin{proof}
Recall from \cite{Fr1} that, for $\alpha\in \bar{\Delta}_+$,
$e_{\al,-1}^\ww$ is of the form
\begin{align}  \label{eq:w}
e_{\al,-1}^\ww 
=a_{\alpha,-1}
+\sum_{\beta\in \bar{\Delta}_+\atop \beta> \alpha}
P_{\beta}^{\alpha}a_{\beta,-1},
\end{align}
where 
$P_{\beta}^{\alpha}$ is a polynomial in $A=\K[a_{\alpha,0}^*]_{\alpha\in \bar{\Delta}_+}$ of weight $\al-\beta$.
Indeed, the weight of the right-hand-side of \eqref{eq:w} equals
$\alpha$ (see for example the first line of \cite[\S 1.3]{Fr1}),
and hence the coefficient $P_{\beta}^{\alpha}\in A$ must be zero
unless $\beta>\alpha$ in the standard dominance order of the root
lattice of $\fing$. It follows that each $a_{\alpha,-1}$ can be
written as
\begin{align}
a_{\alpha,-1} =e_{\al,-1}^\ww +\sum_{\beta\in \bar{\Delta}_+
\atop \beta>\alpha} Q_{\beta}^{\alpha} e_{\be,-1}^\ww
\label{eq:linear-independent}
\end{align}
for some polynomials $Q_{\beta}^{\alpha}\in A$.

Since $(\iota(x_{-1}) \vac)_{(m)} =x^p_{m} -(x^{[p]})_{mp}$ is
central in $U(\affg)$ and $\ww$ is a $\affg$-homomorphism,
$(\iota(x_{-1})^\ww \vac)_{(m)}$ commutes with $e_{\g,n}^\ww$ for any $n$ and any $\g \in \bar
\Delta_+$ (in particular for $\g \ge \al$). By Lemma~\ref{lem:comm
a*}, $(\iota(x_{-1})^\ww \vac)_{(m)}$ also commutes with
$(Q_{\beta}^{\alpha})_{(n)}$ for any $n$. Now by applying
$Y(-,z)_{(n)}$ to \eqref{eq:linear-independent},
$(\iota(x_{-1})^\ww \vac)_{(m)}$ commutes with $a_{\alpha,n}$.
\end{proof}

\begin{remark}
It is elementary to show by induction on $k$ that for any
$k,n,m,i,\be$ one has
$$
[(h_{i,n}^\ww)^k,a_{\be,m}] = \sum_{d=1}^k \binom{k}{d} \be(h_i)
\cdot a_{\be,nd+m} (h_{i,n}^\ww)^{k-d}.
$$
It follows that $[(h_{i,n}^\ww)^p-h_{i,np}^\ww,a_{\be,m}] = 0.$
Similarly, $[(h_{i,n}^\ww)^p-h_{i,np}^\ww,a_{\be,m}^*] =0.$
\end{remark}

\begin{proposition}  \label{prop:eiota}
Formulas \eqref{eq:ep}-\eqref{eq:hp} hold.
\end{proposition}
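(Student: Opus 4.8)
The plan is to prove Proposition~\ref{prop:eiota} by computing the images $\iota(e_i(z))^\ww = (e_i(z)^\ww)^p$ and $\iota(h_i(z))^\ww = (h_i(z)^\ww)^p - h_i(z)^\ww$ directly from the WFF formulas \eqref{eq:e} and \eqref{eq:h}, using Lemmas~\ref{lem:comm a*} and \ref{lem:comm a} to control which normal-ordered products survive upon taking $p$-th powers. The key structural observation is that $e_i(z)^\ww = a_{\al_i}(z) + \sum_{\be} \no P_\be^i(a_\al^*(z))a_\be(z)\no$ is, by \eqref{eq:w}, a sum of terms each of which is an $A$-coefficient times a single $a_{\be,-1}$-type generator; equivalently, all the Fourier modes of $e_i(z)^\ww$ lie (after the twist) in the image of $V=\DB(\bar U)_{\le 1}$ plus the derivation action. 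Since $\iota(e_{i,-1})^\ww\vac = (e_{i,m}^\ww)^p - (e_i^{[p]})_{mp}^\ww$ is central in $U(\affg)$ and hence (by the two commutator lemmas) commutes with all the $a_{\al,n}$ and $a^*_{\al,n}$, it must act on the Fock module $\Mg$ by a scalar-like (i.e. central) operator; this forces the cross terms in the multinomial expansion of the $p$-th power to cancel, leaving only the ``Frobenius-twisted'' diagonal terms.

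Concretely, first I would expand $(e_i(z)^\ww)^p$ as a sum over the $p$-th power of the multinomial in the fields $a_{\al_i}(z)$ and $\no P_\be^i(a_\al^*(z))a_\be(z)\no$. Because the $a_\be(z)$ for distinct $\be$ and the $a_\al^*(z)$ all generate a Weyl-algebra-type structure with $[a_{\al,n},a^*_{\be,m}] = \delta_{\al\be}\delta_{n,-m}$, the mixed terms in the expansion involve nontrivial contractions; the point is that these produce exactly the terms needed to make $(e_i(z)^\ww)^p$ central, and — running the argument as in the proof of Lemma~\ref{pFormula}, i.e. the classical statement that in the $p$-center of a Weyl algebra $(\sum c_\be m_\be(y)\der_\be)^p = \sum c_\be^p m_\be(y^p)\der_\be^p$ — the upshot is that $(e_i(z)^\ww)^p$ collapses to $a_{\al_i}(z)^p + \sum_{\be\neq\al_i} P_\be^i(a_\al^*(z))^p a_\be(z)^p$, with the normal ordering now unnecessary since $a_\al^*(z)^p$ commutes with everything in sight (Lemma~\ref{lem:pcenterWeyl}/Proposition~\ref{pcenter=center}). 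The correction term $(e_i^{[p]})_{mp}^\ww$ vanishes because $e_i^{[p]} = 0$ in $\fing$ (each $e_i$ is a root vector, hence $[p]$-nilpotent; indeed $e_i^{[p]}=0$), so $\iota(e_i(z))^\ww = (e_i(z)^\ww)^p$, giving \eqref{eq:ep}.

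For \eqref{eq:hp} the same mechanism applies to $h_i(z)^\ww = \sum_\be \be(h_i)\no a_\be^*(z)a_\be(z)\no + b_i(z)$, except now there are two new features: the Heisenberg part $b_i(z)$ contributes a nontrivial self-contraction, and $h_i^{[p]} = h_i$ (the $h_i$ are toral, so $\iota(h_i(z)) = h_i(z)^p - h_i(z)$). Taking the $p$-th power, the $a^*a$-part behaves like a sum of commuting weight-zero operators $\be(h_i)\no a_\be^*(z)a_\be(z)\no$ (these are mutually commuting Euler-type operators), whose $p$-th power I would compute via the remark preceding the proposition — the identity $[(h_{i,n}^\ww)^k, a_{\be,m}] = \sum_d \binom{k}{d}\be(h_i) a_{\be,nd+m}(h_{i,n}^\ww)^{k-d}$ and its $a^*$-analogue — to see that $(\sum_\be \be(h_i)\no a_\be^* a_\be\no)^p$ reduces modulo $p$ to $\sum_\be \be(h_i)^p a_\be^*(z)^p a_\be(z)^p - \sum_\be \be(h_i)\no a_\be^*(z)a_\be(z)\no$ plus the $b_i$-cross terms, while the Heisenberg self-interaction $b_i(z)^p$ together with the bracket term $[b_i]$ produces $b_i(z)^p - \der^{(p-1)}b_i(z)$ (this is precisely $\iota$ applied to the Heisenberg field, as recorded in the definition of $\zz_0'(\pi^\kappa)$). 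Subtracting the extra $-h_i(z)^\ww = -\sum_\be \be(h_i)\no a_\be^*a_\be\no - b_i(z)$ then exactly cancels the unwanted lower-order pieces from both the $a^*a$-part and the $b_i$-part, and using $\be(h_i)^p = \be(h_i)$ only when $\be(h_i)\in\mathbb F_p$ — here one uses that $\be(h_i)$ is an integer, the Cartan-matrix-type entry — reduces the sign to $-\sum_\be \be(h_i)a_\be^*(z)^p a_\be(z)^p$, yielding \eqref{eq:hp}.

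The main obstacle I expect is bookkeeping the cross terms between the Weyl-algebra part ($a,a^*$) and the Heisenberg part ($b_i$) in the $p$-th power of $h_i(z)^\ww$, and verifying that no spurious terms of intermediate degree $1,\dots,p-1$ in the $\der_\be$'s (equivalently, in the $a_\be$-modes) survive; the centrality input from Lemmas~\ref{lem:comm a*}--\ref{lem:comm a} is what guarantees this cancellation in principle, but making it manifest requires carefully organizing the multinomial expansion — much as in the proof of Lemma~\ref{pFormula}, where one expands the $p$-th power and commutes all $\der_\be$ to the right, observing that the commutators produce operators of order strictly between $1$ and $p-1$ and must therefore cancel against one another since the total expression lies in the $p$-center. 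I would phrase the argument to reduce everything to that lemma (applied ``fieldwise'', i.e. Fourier-mode by Fourier-mode, legitimized by the Remark following Lemma~\ref{lem:pVO}) plus the explicit Heisenberg computation, rather than grinding the multinomial out by hand.
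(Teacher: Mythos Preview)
Your approach is correct and essentially the same as the paper's. Both arguments expand the $p$-th power of the WFF image, note that the answer must commute with all $a^*_{\alpha,n}$ by Lemma~\ref{lem:comm a*}, and conclude (in analogy with Lemma~\ref{pFormula}) that all terms of intermediate $a_\beta$-degree $1,\dots,p-1$ must cancel, leaving only the Frobenius-twisted diagonal terms.

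Two minor points of streamlining, for your interest. First, the paper uses only Lemma~\ref{lem:comm a*} (commutation with $a_\beta^*$), not Lemma~\ref{lem:comm a}: one writes $(e_i(z)^\ww)^p = \sum_\beta P_\beta^i(a_\alpha^*(z))^p a_\beta(z)^p + \sum_{t=1}^{p-1} \no Y_t(a_\alpha^*(z))\, a_\beta(z)^t\no$ and observes that commuting with $a_\beta^*(z)$ kills the highest surviving $Y_t$ by downward induction on $t$, since $a_\beta(z)^t$ for $1\le t\le p-1$ fails to commute with $a_\beta^*(z)$. Second, for $h_i$ the paper does not go through the commutator identity in the Remark; it simply writes $(\no a_\beta^* a_\beta\no)^p - \no a_\beta^* a_\beta\no = a_\beta^*(z)^p a_\beta(z)^p + \sum_{t=1}^{p-1}\no X_t(a_\beta^*(z))\, a_\beta(z)^t\no$ and repeats the same $a^*$-commutation argument to force $X_t=0$. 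Your route via the explicit commutator recursion works too, but the paper's uniform ``write in graded form and kill by centrality'' argument avoids the bookkeeping you anticipated as the main obstacle.
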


\begin{proof}
Let us fix $i$. 
The strategy is similar to the proof of Lemma~\ref{pFormula}.
We extend the notation to write $P_{\al_i}^i \big(a_\al^*(z)\big) =a_{\al_i}(z)$.  
Then by \eqref{eq:iotaehf} and \eqref{eq:e}, we can write 
\begin{equation}  \label{eq:eizp}
(e_i(z)^p)^\ww
=\sum_{\be \in \bar{\Delta}_+} P_\be^i \big(a_\al^*(z)\big)^p a_\be(z)^p+\sum_{t=1}^{ p-1} \no Y_t(a_\al^*(z)) a_\be(z)^t\no,
\end{equation}
for some differential polynomials $Y_t$, where the last summand arises from contractions in Wick's formula. 
Lemma~\ref{lem:comm a*} can be rephrased by saying that
$(e_i(z)^p)^\ww$ commutes with $a_\be^*(z)$. The first summand on the right-hand side of \eqref{eq:eizp}
commute with  $a_\be^*(z)$, but $a_\be(z)^t$, for $1\le t \le p-1$, do not commute with $a_\be^*(z)$. 
Hence we must have $Y_t=0$ for all $t$ by a downward induction on $t$. This proves \eqref{eq:ep}.

Similarly, noting  $\be(h_i)$ is integral and using \eqref{eq:iotaehf} and \eqref{eq:h}, we have
\begin{equation}  \label{eq:hizp}
(h_i(z)^p -h_i(z))^\ww
= -\sum_{ \be \in \bar{\Delta}_+}  \beta(h_i) \big[(\no a_\be^*(z) a_\be(z)\no )^p -\no a_\be^*(z) a_\be(z)\no \big] + b_i(z)^p
-\der^{(p-1)} b_i(z).
\end{equation}
Now write $(\no a_\be^*(z) a_\be(z)\no )^p -\no a_\be^*(z) a_\be(z)\no
= a_\be^*(z)^p a_\be(z)^p +\sum_{t=1}^{ p-1} \no X_t(a_\be^*(z)) a_\be(z)^t\no$, for some differential
polynomials $X_t$ (Here $\beta$ is fixed).  But by considering the commutation of \eqref{eq:hizp} with $a_\be^*(z)$
and applying Lemma~\ref{lem:comm a*}, we conclude that $X_t=0$ for each $t$. This proves \eqref{eq:hp}.
\end{proof}


To complete the proof of Theorem~\ref{th:ehfww} it remains to prove \eqref{eq:fp}.
Denote by $\fing_\Z$ the $\Z$-lattice generated by the Chevalley generators
$e_\al, f_\al$ and $h_i$, for $\al \in \bar{\Delta}^+$ and $i=1,\ldots, \ell$. 
Denote by $V_\Z$ the $\Z$-lattice of $\Vgc$
spanned by all possible $a_{-i_1} b_{-i_2} c_{-i_3}  \ldots \vac$,
where $a,b,c\ldots \in \fing_\Z$ and $i_1, i_2, i_3,\ldots \ge 1$.
Writing a general vertex operator $Y(a,z)= \sum_{n\in \Z} a_{(n)} z^{-n-1}$,
we recall a general formula from the theory of vertex algebras  (cf. \cite{Fr2}):
\begin{equation} \label{eq:xymn}
[x_{(m)}, y_{(n)}] =\sum_{i \ge 0} \binom{m}{i} \big(x_{(i)} y\big)_{(m+n-i)},
\qquad i \ge 0.
\end{equation}

>From a similar consideration as in the proof of Proposition~\ref{prop:eiota} above, we conclude that 
$\iota(f_i (z))^\ww$ is of the form
\begin{align}
\iota(f_i (z))^\ww = & \sum_{\be \in \bar{\Delta}_+} Q_\be^i \big(a_\al^*(z)\big)^p a_\be(z)^p
 +  \eta \big( \partial_z a_{\al_i}^*(z) \big)^p
 +  a_{\al_i}^*(z)^p R(b_i(z)),
\end{align}
where $\eta \in \K$ and 
\begin{equation}  \label{eq:bp}
R(b_i(z)) =Y(r_i, z) = b_i(z)^p +\ldots
\end{equation}
is a (normal ordered) polynomial in $b_i(z)$ and its derivatives and $r_i \in \pi^{\kappa -\kappa_c}$. 
Formula \eqref{eq:fp} now follows from the proposition below.

\begin{proposition}
\label{prop:exact}
We have
\begin{enumerate}
\item
$R(b_i(z)) =  b_i(z)^p -\der^{(p-1)} b_i(z)$;

\item
$\eta =(\kappa^p -\kappa) \l e_i, f_i \r.$
\end{enumerate}
\end{proposition}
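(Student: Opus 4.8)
The plan is to pin down the two unknown quantities $R(b_i(z))$ and $\eta$ by exploiting the one piece of information we have not yet fully used: the explicit form of the WFF map on the Heisenberg part and the fact that $\iota(f_i(z))^\ww$ must land in the $p$-center $\zz_0(\Mg)\otimes\zz_0(\pi^{\kappa-\kappa_c})$, so in particular its Heisenberg component must be a genuine element of $\zz_0(\pi^{\kappa-\kappa_c})$. Since $\zz_0(\pi^{\kappa-\kappa_c}) = \K[b_{i,n}^p - b_{i,np}]$, and the field attached to $\iota(b_{i,-1})\vac = b_{i,-1}^p\vac - b_{i,-p}\vac$ is exactly $b_i(z)^p - \der^{(p-1)}b_i(z)$ (by Proposition~\ref{prop:commYi} applied to the Heisenberg vertex algebra, with $h_i^{[p]}$ replaced by the relations $b_{i,n}^{[p]}=b_{i,np}$), the natural candidate for $R(b_i(z))$ is forced. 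So the first step is to isolate the ``pure Heisenberg'' contribution to $\iota(f_i(z))^\ww$: in the formula $f_i(z)\mapsto (\text{terms in }a,a^*) + (c_i+(\kappa-\kappa_c)\l e_i,f_i\r)\der_z a^*_{\al_i}(z) + a^*_{\al_i}(z)b_i(z)$, only the last term $a^*_{\al_i}(z)b_i(z)$ involves $b_i(z)$, and taking $\iota(\,\cdot\,) = (\,\cdot\,)^p$ of $f_i(z)$ produces, among many terms, the contribution $a^*_{\al_i}(z)^p R(b_i(z))$ where $R(b_i(z))$ collects everything built purely from the Heisenberg field and its derivatives.

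\textbf{Part (1).} For $R(b_i(z))$: I would argue that $R$ is determined by the requirement that $\iota(f_i(z))^\ww$ commute with $b_{j,m}$ for all $j,m$ (which it must, being in the $p$-center and $\ww$ being a homomorphism, exactly as in Lemmas~\ref{lem:comm a*} and \ref{lem:comm a}). The relevant commutator $[b_{i,m}, \text{(field)}]$ only sees the $a^*_{\al_i}(z)^p R(b_i(z))$ piece through $R$, and the computation $[b_{i,m}, Y(r_i,z)]$ must vanish modulo the ideal; combined with the leading term $b_i(z)^p$ already identified in \eqref{eq:bp}, and the fact that the only $b$-invariant element of $\pi^{\kappa-\kappa_c}$ of the right conformal weight ($p$) and weight ($-p\al_i$-compatible) with leading term $b_i(z)^p$ is $b_i(z)^p - \der^{(p-1)}b_i(z)$, we conclude $R(b_i(z)) = b_i(z)^p - \der^{(p-1)}b_i(z)$. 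Alternatively, and perhaps more cleanly, one computes $\iota(f_i)^\ww$ directly by working in the rank-one Heisenberg sub-vertex-algebra: the field $a^*_{\al_i}(z)b_i(z)$ is a product of a commuting (pure-$a^*$) field with the Heisenberg field, so $(a^*_{\al_i}(z)b_i(z))^{\circ p}$ with normal ordering, after collecting the Wick contractions against the $\der_z a^*_{\al_i}(z)$ term, reproduces $a^*_{\al_i}(z)^p(b_i(z)^p - \der^{(p-1)}b_i(z))$ by the same mechanism as in Lemma~\ref{lem:pVO} (the cross-terms of orders $1$ through $p-1$ cancel against analogous cross-terms).

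\textbf{Part (2).} For $\eta$: this is the quantitative heart of the argument and I expect it to be the main obstacle, because it requires tracking the precise scalar, not just the shape of the answer. The strategy is to reduce modulo $p$ from a characteristic-zero computation. Over $\C$ (or over $\Z$, using the lattice $V_\Z$ introduced just before the statement), the WFF homomorphism sends $f_i(z)$ to the field in \eqref{eq:f} with coefficient $c_i + (\kappa-\kappa_c)\l e_i,f_i\r$ on the $\der_z a^*_{\al_i}(z)$ term. Raising to the $p$-th power and extracting the coefficient of $(\der_z a^*_{\al_i}(z))^p$: the only way to produce a $p$-th power of $\der_z a^*_{\al_i}(z)$ with no $b$-fields and no $a$-fields is to take that term $p$ times, giving naively $(c_i+(\kappa-\kappa_c)\l e_i,f_i\r)^p(\der_z a^*_{\al_i}(z))^p$; but one must also account for contributions to $(\der_z a^*_{\al_i}(z))^p$ coming from normal-ordered contractions among the mixed terms $a^*_{\al_i}(z)^2 a_{\al_i}(z)$ (via $Q^i_{\al_i}=-a^*_{\al_i}(z)^2$ from Lemma~\ref{lem:Qi}) and $a^*_{\al_i}(z)b_i(z)$, since contracting an $a_{\al_i}$ against an $a^*_{\al_i}$ lowers the number of fields and differentiates. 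This is where the integrality of $c_i$ matters: modulo $p$, $(c_i + (\kappa-\kappa_c)\l e_i,f_i\r)^p \equiv c_i^p + (\kappa^p-\kappa_c^p)\l e_i,f_i\r^p \equiv c_i + (\kappa^p - \kappa_c)\l e_i, f_i\r^p$ in $\K$ if $c_i \in \mathbb{F}_p$, but in general $c_i\in\Z$ so $c_i^p\equiv c_i$; and one expects the leftover $a$-$a^*$ contraction terms, together with the $\der^{(p-1)}$-type corrections, to combine with the $c_i$ and $\kappa_c$ pieces so that what survives is exactly $(\kappa^p-\kappa)\l e_i,f_i\r$. To make this rigorous I would: (i) verify the identity first for $\widehat{\mathfrak{sl}_2}$ by brute force using \eqref{sl2}, where $c=0$ (cf. the displayed $\widehat{\mathfrak{sl}_2}$ example, which already asserts $\eta = \kappa^p-\kappa$ with $\l e,f\r$ normalized to $1$), pinning $\eta = (\kappa^p-\kappa)\l e_i,f_i\r$ in the rank-one case; (ii) observe that for general $\fing$ the coefficient of $(\der_z a^*_{\al_i}(z))^p$ in $\iota(f_i(z))^\ww$ only involves the $\al_i$-root-string data and is computed inside the $\mathfrak{sl}_2$-triple $(e_i,h_i,f_i)$, so it reduces to case (i); the $c_i$-dependence must drop out because $\iota(f_i(z))^\ww$ is forced to lie in the $p$-center and the $p$-center formula on the $\fing$-side, $\iota(f_i(z)) = f_i(z)^p$, carries no free parameter — any $c_i$-dependence would contradict the fact that $\ww$ intertwines the canonical $p$-center of $\Vgc$ with that of $\Mg\otimes\pi^{\kappa-\kappa_c}$.

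\textbf{Expected main difficulty.} The hard part will be bookkeeping the Wick contractions in $(f_i(z))^{\circ p}$ precisely enough to see that the spurious $c_i$-terms cancel and that the correction to $(\kappa-\kappa_c)^p$ is exactly $-\kappa$ rather than $-\kappa_c$; equivalently, showing that $\der^{(p-1)}$-corrections conspire with the $\kappa_c$ shift. Reducing to $\widehat{\mathfrak{sl}_2}$ and to the characteristic-zero identity before reduction mod $p$ is the mechanism I would rely on to avoid an unilluminating direct computation.
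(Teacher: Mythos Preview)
Your approach to Part~(2) --- reduce to $\widehat{\mathfrak{sl}}_2$ via Lemma~\ref{lem:Qi} and compute there --- is exactly what the paper does (the paper's proof says only ``Part~(2) reduces to the $\mathfrak{sl}_2$ case by Lemma~\ref{lem:Qi}'' and leaves the brute-force check implicit). So that part is fine.

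For Part~(1), however, your centrality argument has a genuine gap, and the paper takes an entirely different route. Your claim that ``the only $b$-invariant element of $\pi^{\kappa-\kappa_c}$ of the right conformal weight with leading term $b_i(z)^p$ is $b_i(z)^p-\partial^{(p-1)}b_i(z)$'' is false. At conformal weight $p$ in a rank-one Heisenberg of nonzero level, the annihilation operator $b_{i,n}$ (for $1\le n\le p-1$) acts as $n\mu\,\partial/\partial b_{i,-n}$, and $b_{i,p}$ acts as $p\mu\,\partial/\partial b_{i,-p}=0$; hence both $b_{i,-1}^p\vac$ and $b_{i,-p}\vac$ are singular, and the center there is two-dimensional. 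Centrality together with the leading term $b_{i,-1}^p$ therefore fixes $r_i$ only up to an unknown multiple of $b_{i,-p}$, precisely the coefficient you are trying to determine. (At $\kappa=\kappa_c$ the situation is worse: $\pi^0$ is commutative and centrality is no constraint at all.) Your alternative ``direct Wick computation'' sketch does not supply the missing constraint either, since Lemma~\ref{lem:pVO} treats a single field, not a normal-ordered product such as $a^*_{\al_i}(z)b_i(z)$ interacting with the $\partial_z a^*_{\al_i}(z)$ term.

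The paper instead works over the integral form: it realizes Wakimoto modules over $\Z$, establishes (by weight considerations in $V_\Z$) that $(e_{-1}^p\vac)_{(p-1)} f_{-1}^p\vac \equiv -p(h_{-1}^p-h_{-p}) \pmod{p^2 V_\Z}$, and then computes the same quantity through the Borcherds identity \eqref{eq:xymn} using the WFF formulas \eqref{eq:e}--\eqref{eq:f}. Matching the $b$-dependent part of both computations modulo $p\WZ$ forces $r_i = b_{i,-1}^p - b_{i,-p}$. This integral-form trick is what pins down the stray coefficient of $b_{i,-p}$ that your argument cannot see, and it works uniformly in $\kappa$.
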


\begin{proof}[Sketch of a proof]
It is possible to realize Wakimoto modules over $\Z$ as limits of twisting Verma modules, denoted by $\WZ$,
on which $e_{i,n}, h_{i,n}, f_{i,n}$ act. Then the formulas \eqref{eq:e}-\eqref{eq:f} are understood
as a congruence equation modulo $p \WZ$ when acting on any  $v \in \WZ$; 
moreover $(e^p_{-1} \vac)^\ww_{(n)} v \in p \WZ, (f^p_{-1} \vac)^\ww_{(n)} v \in p \WZ$, thanks to Lemmas~\ref{lem:comm a*}
and \ref{lem:comm a}. 
>From weight consideration, we have 
$
(e^p_{-1}\vac)_{(p-1)} f^p_{-1} \vac \equiv -p (h_{-1}^p -h_{-p}) \mod p^2 V_\Z.
$

On the other hand, for $n\ge 0$ and $v\in \WZ$,  we have
\begin{align} \label{eq:efpn}
\Big((  e^p_{-1}\vac)_{(p-1)} f^p_{-1} \vac \Big)^\ww_{(n)} v
   &
 =\sum_{i =0}^{p-1} \left( (-1)^i \binom{p-1}{i} (e^p_{-1}\vac )^\ww_{(p-1-i)} (f^p_{-1} \vac)^\ww_{(n+i)} v \right.
    \notag  \\
& \qquad   -(-1)^{p-1}  ( f^p_{-1}\vac )^\ww_{(n+p-1-i)} (e^p_{-1} \vac)^\ww_{(i)} v \Big).
\end{align}
But if we compute \eqref{eq:efpn} by applying \eqref{eq:e}-\eqref{eq:f}, 
the only term involving $b_{i,n}$ is given by $-r_i v$, which by \eqref{eq:bp} must be equal to $-(b_{i,-1}^p -b_{i,-p})v$ modulo $p\WZ$.
Part (1) now follows from this together with \eqref{eq:bp}.

Part (2) reduces to the $\mf sl_2$ case by Lemma~\ref{lem:Qi}.
\end{proof}

\section{Irreducible baby Wakimoto modules $\mf w (- \rho)$}
\label{sec:irred}

\subsection{ Mathieu's character formula reformulated}

For an integral weight $\la \in \affh^*$, denote by $\mf l (\la)$ the irreducible quotient $\affg$-module  of the Verma $\affg$-module 
of high weight $\la$. Recall the torus $T$ from \$\ref{sec:affinep}. 
Then $\mf l(\la)$ is naturally an $\affg$-$T$-module in the sense of Jantzen \cite{Jan}, and this allows one
to makes sense its (formal) character $\text{ch}\, \mf l (\la)$ in the usual sense. 

Mathieu \cite{Ma} proved the following character formula
\begin{equation}  \label{eq:char}
\text{ch } \mf l (- \rho) 
= e^{-\rho} \prod_{\alpha \in \Delta_+^{\rm re}} \frac{(1-e^{- p\al})}{(1-e^{-\al})}.
\end{equation}
Note that $-\rho$ is a weight at the critical level $\kappa_c$. 
We have the following reformulation of a main result of Mathieu, which has the 
advantage that the irreducible $\affg$-module  $\mf l(-\rho)$ is realized explicitly as the baby Wakimoto module $\mf w(-\rho)$
in terms of (restricted) free fields.

 \begin{theorem}  \label{th:rho}
 The baby Wakimoto module $\mf w(-\rho)$ is the irreducible high weight $\affg$-module of high weight $-\rho$.
 \end{theorem}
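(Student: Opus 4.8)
The plan is to identify $\mf w(-\rho)$ with the specialization at the critical level of the Wakimoto module $W(-\rho)$ over $\C$ reduced modulo $p$, and then invoke Mathieu's irreducibility result \cite{Ma}. First I would unwind the definition of $\mf w(-\rho)$ from Section~\ref{sec:Wakimoto}: since $-\rho$ is a weight at the critical level $\kappa_c$, the relevant module is $\mf w_{\xi}(\la)=\Mg/I_\xi$ with $\la(t)$ the constant loop $-\rho$ and $\xi$ the $p$-character on $\A^\affg$ determined by \eqref{eq:pch} with all values zero; compatibility forces $\xi^\pi=0$ and $(-\rho)(h_i)=-1\in\mathbb F_p$, so $\mf w(-\rho)$ is indeed a module over the restricted vertex algebra $\Mgo$, hence (via $\ww_0$ of Corollary~\ref{babyW}) over $\Vgco$, and thus a $\affg$-module at the critical level annihilated by the proper $p$-center.

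Next I would compute the character of $\mf w(-\rho)$ directly from its free-field realization. As a vector space $\Mg\cong\K[a_{\al,n-1},a^*_{\al,n}]_{\al\in\bar\Delta_+,\,n\le 0}$, and $I_\xi$ is the ideal generated by the $p$-th powers of all these generators, so $\mf w(-\rho)=\Mg/I_\xi$ has a basis of monomials in which each generator appears with exponent $<p$. The $\affg$-$T$-weight of $a^*_{\al,n}$ is $n\delta-\al$ (shifted by the weight of the cyclic vector $-\rho$) and that of $a_{\al,n-1}$ is $(n-1)\delta+\al$; summing the resulting geometric-type series over exponents $0,\dots,p-1$ produces exactly the factor $\frac{1-e^{-p\beta}}{1-e^{-\beta}}$ for each real root $\beta$, and the bookkeeping matches the right-hand side of \eqref{eq:char}. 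Hence $\ch\mf w(-\rho)=\ch\mf l(-\rho)$; in particular $\mf w(-\rho)$ has the same character as the irreducible module, so it suffices to prove it is irreducible (a quotient or submodule argument would then finish, but here one shows irreducibility directly).

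For irreducibility I would argue that $\mf w(-\rho)$ is obtained from the integral Wakimoto module $W_\Z(-\rho)$ by base change $\K\otimes_\Z-$ followed by killing the proper $p$-center (equivalently, reduction modulo $p$ in the sense of Mathieu), using the $\Z$-form $V_\Z$ and the integrality of the WFF coefficients $c_i$, $\be(h_i)$ recorded before Lemma~\ref{lem:Qi} and in Section~\ref{sec:proof}. Mathieu's theorem \cite{Ma} states precisely that the complex Wakimoto module of highest weight $-\rho$ stays irreducible over $U$ after reduction modulo $p$, and since the proper $p$-center acts by scalars that are forced to vanish on the highest weight line, the $U$-module structure and the $\mf u_0(\affg)$-module structure on the reduction coincide with $\mf l(-\rho)$; this identifies $\mf w(-\rho)$ with $\mf l(-\rho)$. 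The main obstacle is the bookkeeping in the previous paragraph: one must verify carefully that the subalgebra $I_\xi$ together with the Fock-space grading conventions reproduces Mathieu's reduction-mod-$p$ Wakimoto module on the nose (rather than some twist), and in particular that no extra $p$-central relations beyond those in $\ZZ_0'(\affg)$ are imposed — this is where the distinction, emphasized throughout Section~\ref{sec:VA}, between the full $p$-center and the proper $p$-center (so that $c^p-c$ is \emph{not} killed and the critical central charge survives) must be handled with care.
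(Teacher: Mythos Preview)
Your proposal is correct in outline, but it takes a longer path than the paper, and in fact the paper's argument is precisely the one you mention parenthetically and then set aside.

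The paper's proof is three lines: compute $\ch\,\mf w(-\rho)$ from the free-field description (your second paragraph does this), quote Mathieu's character formula \eqref{eq:char} for $\mf l(-\rho)$, observe the obvious surjection $\mf w(-\rho)\twoheadrightarrow\mf l(-\rho)$, and conclude by character comparison. That is exactly the ``quotient or submodule argument'' you note would finish the job. Your alternative route---identifying $\mf w(-\rho)$ with Mathieu's reduction-mod-$p$ Wakimoto module and then invoking his \emph{irreducibility} statement directly---would also work, but it requires the bookkeeping you flag as the main obstacle (matching the vertex-algebra baby Wakimoto to Mathieu's construction on the nose, and passing from $U$-irreducibility to $\mf u_0(\affg)$-irreducibility via the restricted weight $(p-1)\rho$). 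The paper avoids all of that by using only the character consequence of Mathieu's theorem, which is already recorded as \eqref{eq:char} before the theorem is stated.

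So your argument is not wrong, but you are doing strictly more work: you use both the character formula \emph{and} the irreducibility form of Mathieu's result, whereas the paper uses only the former together with the evident surjection.
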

 
 \begin{proof}
 By construction of the baby Wakimoto module, we have the following character formula:
\begin{equation*}
\text{ch } \mf w (- \rho)  
= e^{-\rho} \prod_{\alpha \in \Delta_+^{\rm re}} \frac{(1-e^{- p\al})}{(1-e^{-\al})}.
\end{equation*}
The (obvious) surjective homomorphism $\mf w(-\rho) \rightarrow \mf l(-\rho)$ must be an
isomorphism by a character comparison. 
 \end{proof}

As modules over $\affg$, we have $\mf l(-\rho)=\mf l((p-1)\rho)$. 
(More general $\mf l (\la) =\mf l(\mu)$ if $\la -\mu$ is a $p$-multiple of an integral weight of $\affg$.)
Denote by $U=\K \otimes_\Z U_\Z$, where $U_\Z$
is the Kostant-Garland $\Z$-form of the universal enveloping algebra of $\affg$. 
Denote by $L(\la)$ (the notation $l(\la)$ was used in \cite{Ma}) 
the irreducible highest weight $U$-module of highest weight $\la$ (which is assumed to be integral).
Note that the restricted enveloping algebra is a subalgebra of $U$, i.e., ${\mf u}_0(\affg) \subseteq U$. 
Since $(p-1)\rho$ is a restricted weight, it follows by Mathieu \cite[Lemma~1.7]{Ma} that $L((p-1)\rho)$
when restricted to $\mf u_0(\affg)$ remains to be irreducible, and hence $L((p-1)\rho) \cong \mf l((p-1)\rho)$ 
as $\affg$-modules. Therefore Theorem~\ref{th:rho} and \eqref{eq:char} have
the following implication. 

\begin{corollary} [\cite{Ma}]
We have the following character formulas:
\begin{align} 
 \label{eq:p-1rho}
{\rm ch }\;  l( (p-1) \rho) &= e^{ (p-1)\rho}   \prod_{\alpha \in \Delta_+^{\rm re}} \frac{(1-e^{- p\al})}{(1-e^{-\al})}, 
 \\
 \label{eq:rho}
{\rm ch }\; l(- \rho) &= e^{-\rho}\prod_{\alpha \in \Delta_+^{\rm re}} \frac{1}{1-e^{-\al}}.
\end{align}
\end{corollary}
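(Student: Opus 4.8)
The plan is to derive \eqref{eq:p-1rho} directly from Theorem~\ref{th:rho} and then obtain \eqref{eq:rho} by the Steinberg tensor product theorem. Recall that $l(\la)=L(\la)$ is merely Mathieu's notation for the irreducible $U$-module, and that it has already been observed above that $(p-1)\rho$ is restricted and $L((p-1)\rho)\cong\mf l((p-1)\rho)$ as $\affg$-modules.

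For \eqref{eq:p-1rho}: by the construction of the baby Wakimoto module in the proof of Theorem~\ref{th:rho} we have formula \eqref{eq:char} for $\ch\mf l(-\rho)$ as an $\affg$-$T$-module. Since $-\rho-(p-1)\rho=-p\rho$ is $p$ times an integral weight of $\affg$, the general fact $\mf l(\la)=\mf l(\mu)$ for such pairs shows that $\mf l((p-1)\rho)$ is the same $\affg$-module as $\mf l(-\rho)$ but carrying the $T$-grading twisted by the character $e^{p\rho}$, hence
\[
\ch\mf l((p-1)\rho)=e^{p\rho}\,\ch\mf l(-\rho)=e^{(p-1)\rho}\prod_{\al\in\Delta_+^{\rm re}}\frac{1-e^{-p\al}}{1-e^{-\al}}.
\]
By Mathieu's Lemma~1.7, recalled above, $L((p-1)\rho)$ restricts irreducibly to $\mf u_0(\affg)$ and so is isomorphic to $\mf l((p-1)\rho)$ as an $\affg$-$T$-module with highest weight $(p-1)\rho$; this gives \eqref{eq:p-1rho}.

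For \eqref{eq:rho}: I would apply the affine Steinberg tensor product theorem (available from \cite{Ma}) to the restricted digit decomposition $-\rho=(p-1)\rho+p(-\rho)$, obtaining $L(-\rho)\cong L((p-1)\rho)\otimes L(-\rho)^{[1]}$ with $(-)^{[1]}$ the Frobenius twist. Writing $\Phi$ for the ring endomorphism $e^{\nu}\mapsto e^{p\nu}$ of the completed ring of formal characters, this becomes the fixed-point equation
\[
\ch L(-\rho)=\ch L((p-1)\rho)\cdot\Phi\big(\ch L(-\rho)\big).
\]
Substituting \eqref{eq:p-1rho}, a one-line computation shows the candidate $e^{-\rho}\prod_{\al\in\Delta_+^{\rm re}}(1-e^{-\al})^{-1}$ solves this equation, since the factors $\prod_\al(1-e^{-p\al})^{\pm1}$ cancel and $e^{(p-1)\rho}e^{-p\rho}=e^{-\rho}$. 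Uniqueness then follows: any formal character of a highest weight $\affg$-module of highest weight $-\rho$ has the form $e^{-\rho}(1+r)$ with $r$ a $\Z$-series supported strictly below $0$ in the (affine) root lattice, and on such series the operator $r\mapsto\big(\prod_\al\tfrac{1-e^{-p\al}}{1-e^{-\al}}\big)\Phi(r)$ multiplies the height of its lowest nonzero term by $p$ (each graded piece being finite dimensional), so $0$ is its only fixed point; this yields \eqref{eq:rho}.

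The main obstacle is bookkeeping rather than new ideas. One must (i) keep track of the $T$-grading so that the abstract isomorphism $\mf l(-\rho)\cong\mf l((p-1)\rho)$ is correctly promoted to the character identity with the twist by $e^{p\rho}$, and (ii) fix the completed character ring in which the Steinberg identity, the infinite product in \eqref{eq:rho}, and the contraction argument all make sense, together with the fact that the affine Steinberg tensor product theorem and Mathieu's Lemma~1.7 are valid for the Kostant--Garland form $U$. Both points are supplied by \cite{Ma}, so here it suffices to cite them and carry out the short computations above.
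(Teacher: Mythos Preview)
Your argument is correct and follows the same route as the paper: deduce \eqref{eq:p-1rho} from \eqref{eq:char} via the identification $L((p-1)\rho)\cong\mf l((p-1)\rho)=\mf l(-\rho)$ (using that $(p-1)\rho$ is restricted and Mathieu's Lemma~1.7), and then obtain \eqref{eq:rho} from \eqref{eq:p-1rho} by the Steinberg tensor product theorem. The paper simply asserts the Steinberg step as an equivalence, whereas you spell it out via the fixed-point equation and a height contraction; that extra detail is sound (note the linearization $s=B\cdot\Phi(s)$ for the difference of two solutions is what makes your contraction argument work).
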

The above two formulas are equivalent by Steinberg tensor product theorem.

\subsection{Conjectures and further problems}

Recall the vertex algebra $V^{\kappa} (\affg_\C)$ over $\C$ has trivial center at a non-critical level $\kappa$;
at the critical level, $V^{\kappa_c} (\affg_\C)$ has a large center, which is explicitly described in \cite{Fr1, Fr2}.
This center continues to make sense for  $V^{\kappa_c} (\affg)$ over $\K$ in characteristic $p$; we shall refer to 
this as the Harish-Chandra center of $V^{\kappa_c} (\affg)$ and denote it by 
$\zz_{\rm HC}(V^{\kappa_c} (\affg))$.

\begin{conjecture}
\begin{enumerate}
\item
For $\kappa \neq \kappa_c$, the center of the vertex algebra $\Vg$ coincides with the $p$-center $\zz_0(\Vg)$.

\item
The center of the vertex algebra $V^{\kappa_c} (\affg)$ is generated by the Harish-Chandra center 
$\zz_{\rm HC}(V^{\kappa_c} (\affg))$ and the $p$-center $\zz_0(V^{\kappa_c} (\affg))$.
\end{enumerate}
\end{conjecture}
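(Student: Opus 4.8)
The plan is to reformulate the center as a space of invariants and then pass to the associated graded, reducing the problem to a question in (modular) invariant theory. Recall that $v\in\Vg$ lies in the center if and only if $x_{n}v=0$ for all $x\in\fing$ and all $n\ge 0$; that is, the center is the subspace $\Vg^{\fing[t]}$ of vectors annihilated by the nonnegative modes, where $\fing[t]=\fing\otimes\K[t]$ acts through $x\otimes t^{n}\mapsto x_{n}$ ($n\ge 0$). I would equip $\Vg$ with the standard (Li) filtration, for which $\mathrm{gr}\,\Vg\cong S\big(\fing\otimes t^{-1}\K[t^{-1}]\big)$ as a commutative Poisson vertex algebra, the action of $\fing[t]$ becoming a Poisson (coadjoint–loop) action by derivations. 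Taking symbols yields an injection $\mathrm{gr}\,\zz_0(\Vg)\subseteq\mathrm{gr}\,z(\Vg)\hookrightarrow S\big(\fing\otimes t^{-1}\K[t^{-1}]\big)^{\fing[t]}$ into the Poisson invariants, so it suffices to compute these invariants in characteristic $p$ and to show that every invariant symbol is the symbol of a genuine central element lying in the asserted subalgebra.

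There are two evident sources of Poisson invariants. First, for any $y\in\fing\otimes t^{-1}\K[t^{-1}]$ the Frobenius power $y^{p}$ is Poisson central, since $\{x_{n},y^{p}\}=p\,y^{p-1}\{x_{n},y\}=0$; these $p$-th powers are exactly the symbols of the generators $\iota(x_{-r})=x_{-r}^{p}-(x^{[p]})_{-rp}$ of $\zz_0(\Vg)$ produced by Proposition~\ref{prop:commYi}, which therefore lift the Frobenius invariants to honest central elements. Second, the fundamental invariants of $S(\fing)^{\fing}$ produce, via the classical Feigin--Frenkel construction, a polynomial family of Poisson invariants whose reductions modulo $p$ are the symbols of $\zz_{\rm HC}(V^{\kappa_c}(\affg))$, provided the finitely many denominators in the Sugawara/Feigin--Frenkel formulas are prime to $p$ (the bad primes must be treated separately). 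For Part~(1) I would invoke the level dependence: at a non-critical level the Sugawara operators show the positive-weight characteristic-zero invariants fail to be central—this is the classical vanishing of the center away from $\kappa_c$—so only the Frobenius invariants survive and one obtains $z(\Vg)=\zz_0(\Vg)$. For Part~(2), at $\kappa=\kappa_c$ both families persist, and one lifts the reduced Feigin--Frenkel symbols using mod-$p$ reductions of the integral Feigin--Frenkel central elements.

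The hardest step is the purely classical one: proving that $S\big(\fing\otimes t^{-1}\K[t^{-1}]\big)^{\fing[t]}$ in characteristic $p$ is generated by the powers $y^{p}$ together with the reductions of the characteristic-zero Feigin--Frenkel symbols, with no sporadic extra invariants. In characteristic zero this rests on the Chevalley description of $S(\fing)^{\fing}$ as a polynomial ring and on a free-generation argument for the symbol map; in characteristic $p$ the invariant theory of $\fing$ is delicate—$S(\fing)^{\fing}$ and $S(\fing)^{\bar G}$ may differ, and restriction to $\finh$ can fail to be an isomorphism for small $p$—so separating the expected invariants from modular exceptions is the crux and will likely force a good-prime hypothesis. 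One also owes a flatness (PBW) argument showing $\mathrm{gr}\,z(\Vg)$ fills out the whole invariant ring, not merely embeds in it, so that the lifted generators account for everything.

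An attractive alternative route, in the spirit of this paper, sidesteps the modular invariant theory. The center of the free-field side is already known: the center of $\Mg$ equals $\zz_0(\Mg)$ by Proposition~\ref{pcenter=center}, and for $\kappa\ne\kappa_c$ the nondegeneracy of the Heisenberg commutation form forces the center of $\pi^{\kappa-\kappa_c}$ to coincide with $\zz_0(\pi^{\kappa-\kappa_c})$; hence $z\big(\Mg\otimes\pi^{\kappa-\kappa_c}\big)=\zz_0(\Mg)\otimes\zz_0(\pi^{\kappa-\kappa_c})$. Combining this with injectivity of the WFF homomorphism $\ww$ (a standard $\Z$-form argument) and with Theorem~\ref{th:pcenter}, I would deduce Part~(1) by identifying $z(\Vg)$ with the $\ww$-preimage of this free-field center. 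The one remaining point is that this preimage is no larger than $\zz_0(\Vg)$—that $\ww$ reflects centrality—which reduces to a controlled comparison of the nonnegative modes $x_n^\ww$ against the free-field annihilation operators, and this I expect to be the main obstacle in the alternative approach.
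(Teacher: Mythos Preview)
The statement you are attempting to prove is labeled a \emph{Conjecture} in the paper (it appears in the final subsection on ``Conjectures and further problems''), and the paper offers no proof for it. There is therefore no paper argument to compare your proposal against.

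As for the proposal itself: what you have written is a research outline, not a proof, and you correctly flag the gaps yourself. In the associated-graded approach, the crux is precisely the step you call ``the hardest'': establishing that the Poisson invariants $S\big(\fing\otimes t^{-1}\K[t^{-1}]\big)^{\fing[t]}$ in characteristic $p$ are generated by the Frobenius powers together with the reductions of the Feigin--Frenkel symbols. This is genuinely open; the characteristic-zero argument rests on Kostant's description of $S(\fing)^{\fing}$ and on a freeness/flatness statement, both of which are known to fail or require modification for small $p$. You also note that one needs $\mathrm{gr}\,z(\Vg)$ to \emph{fill out} the invariant ring rather than merely embed in it; this surjectivity is exactly the hard direction and does not follow from the injectivity you sketch. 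In the alternative WFF route, the gap you name at the end is again the essential one: knowing $z(\Mg\otimes\pi^{\kappa-\kappa_c})=\zz_0(\Mg)\otimes\zz_0(\pi^{\kappa-\kappa_c})$ and that $\ww$ is injective tells you only that $z(\Vg)\subseteq\ww^{-1}\big(\zz_0(\Mg)\otimes\zz_0(\pi^{\kappa-\kappa_c})\big)$, and showing this preimage equals $\zz_0(\Vg)$ is essentially a restatement of the conjecture rather than a reduction of it.
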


A $p$-character $\xi^M$ of  $\mathcal A^\affg$ is called graded if $\xi^M(a_{\al,n})=0 =\xi^M(a^*_{\al,n})$ for all $n\neq 0$. 
A $p$-character $\xi^\pi$ of $\B^\h_\kappa$ is  graded if $\xi^\pi(b_{i,n})=0$ for all $n\neq 0$ and $1\le i \le \ell$. 
Similarly, a graded $p$-character for $\affg$ can be defined. 

The modular representation theory of (finite-dimensional) Lie algebras has been well developed; cf. the review of 
Jantzen \cite{Jan}. It will be of great interest to develop modular representation theory for an affine Lie algebra $\affg$, say when
the $p$-character is (graded) semisimple or nilpotent. 
In particular, one may ask if the baby Wakimoto modules are irreducible for {\em generic}  (graded) semisimple $p$-characters.
The modular representation theory of the algebra $U$ (or the corresponding algebraic group of $\affg$) has been very
challenging; we refer to \cite{Lai} and the references therein for results in this direction. 
The modular representation theory of $\affg$ should be somewhat more accessible and flexible by imposing
various conditions on $p$-characters.


\end{document}